\newlength{\defbaselineskip}
\newcommand{\setlinespacing}[1]%
           {\setlength{\baselineskip}{#1 \defbaselineskip}}
\numberwithin{equation}{section}
\newtheorem{thm}{Theorem}[section]
\newtheorem{cor}[thm]{Corollary}
\newtheorem{lem}[thm]{Lemma}
\newtheorem{prop}[thm]{Proposition}
\theoremstyle{definition}
\theoremstyle{remark}
\newtheorem{rem}[thm]{Remark}
\numberwithin{equation}{section}
\begin{document}

\title[Strichartz estimates in Wiener amalgam spaces]
{Strichartz estimates in Wiener amalgam spaces and applications to nonlinear wave equations}

\author{Seongyeon Kim, Youngwoo Koh and Ihyeok Seo}

\thanks{This work was supported by NRF-2019R1F1A1054310 (Koh) and NRF-2019R1F1A1061316 (Seo).}

\subjclass[2010]{Primary: 35B45, 35L05; Secondary: 42B35 }
\keywords{Strichartz estimates, wave equation, Wiener amalgam spaces}

\address{Department of Mathematics, Sungkyunkwan University, Suwon 16419, Republic of Korea}
\email{synkim@skku.edu}

\address{Department of Mathematics Education, Kongju National University, Kongju 32588,
Republic of Korea}
\email{ywkoh@kongju.ac.kr}

\address{Department of Mathematics, Sungkyunkwan University, Suwon 16419, Republic of Korea}
\email{ihseo@skku.edu}

\begin{abstract}
In this paper we obtain some new Strichartz estimates for the wave propagator $e^{it\sqrt{-\Delta}}$
in the context of Wiener amalgam spaces.
While it is well understood for the Schr\"odinger case, nothing is known about the wave propagator.
This is because there is no such thing as an explicit formula for the integral kernel of the propagator
unlike the Schr\"odinger case.
To overcome this lack, we instead approach the kernel by rephrasing it as an oscillatory integral involving Bessel functions
and then by carefully making use of cancellation in such integrals based on the asymptotic expansion of Bessel functions.
Our approach can be applied to the Schr\"odinger case as well.
We also obtain some corresponding retarded estimates to give applications to nonlinear wave equations
where Wiener amalgam spaces as solution spaces can lead to a finer analysis of the local and global behavior of the solution.
\end{abstract}

\maketitle

\section{Introduction} \label{sec1}

The space-time integrability of the wave propagator $e^{it\sqrt{-\Delta}}$, known as \textit{Strichartz estimates},
has been extensively studied over the past several decades and is completely understood:
\begin{equation} \label{classi}
\|e^{it\sqrt{-\Delta}} f\|_{L_t^q L_x^r} \lesssim \|f\|_{\dot H^{\sigma}},
\end{equation}
where $(q,r)$ is wave-admissible, i.e., $2\le q\le \infty$, $2\leq r < \infty$,
\begin{equation} \label{waveadmi}
\frac2q +\frac{n-1}r \le \frac{n-1}2 \quad\text{and}\quad \frac1q + \frac nr= \frac n2 -\sigma.
\end{equation}
The diagonal case $q=r$ was obtained in \cite{Str} in connection with the restriction theorems for the cone.
See \cite{LS,KT} for the general case $q\neq r$.

In this paper we are concerned with obtaining these Strichartz estimates
in Wiener amalgam spaces which, unlike the $L^p$ spaces, control the local regularity of a function and its decay at infinity separately.
This separability makes it possible to perform a finer analysis of the local and global behavior of the propagator.
These aspects were originally pointed out in the several works by Cordero and Nicola \cite{CN,CN2,CN3} in the context of the Schr\"odinger propagator $e^{it\Delta}$,
although the spaces were first introduced by Feichtinger \cite{F}
and have already appeared as a technical tool in the study of partial differential equations (\cite{T}).
See also \cite{S}.

While it is well understood for the Schr\"odinger case, nothing is known about the wave propagator.
The arguments used for the former case take advantage of the explicit formula for the integral kernel of $e^{it\Delta}$.
This makes it possible to obtain some time-deay estimates just by calculating the kernel directly on Wiener amalgam spaces,
and ultimately to appeal to the Keel-Tao approach \cite{KT}.
However, it is no longer available for the wave case in which there is no such thing as an explicit formula for the corresponding kernel.

To overcome this lack, we instead consider the problem of obtaining a pointwise estimate for the integral kernel of the Fourier multiplier ${|\nabla|}^{-\sigma}e^{it\sqrt{-\Delta}}$ by rephrasing it as an oscillatory integral involving Bessel functions using polar coordinates
and then by carefully estimating such integrals on Wiener amalgam spaces based on the asymptotic expansion of Bessel functions.
Our approach in this paper is different from that of Cordero and Nicola mentioned above, and can be applied to the Schr\"odinger case as well (\cite{KKS}).

Before stating our results, we recall the definition of Wiener amalgam spaces.
Let $\varphi \in C_0^{\infty}$ be a test function satisfying $\|\varphi\|_{L^2} =1$.
Let $1\le p,q \le \infty$.
Then the Wiener amalgam space $W(p,q)$
is defined as the space of functions $f \in L_{\textrm{loc}}^p$
equipped with the norm
\begin{equation*}
\|f\|_{W(p,q)}= \big\|\, \|f \tau_x \varphi\|_{L^p} \big\|_{L_x^q},
\end{equation*}
where $\tau_x \varphi(\cdot) = \varphi(\cdot-x)$.
Here different choices of $\varphi$ generate the same space and yield equivalent norms.
This space can be seen as a natural extension of $L^p$ space in view of $W(p,p)=L^p$,
and $W(A,B)$ for Banach spaces $A$ and $B$ is also defined in the same way.
We also list some basic properties of these spaces which will be frequently used in the sequel:
\begin{itemize}
\item
\textit{Inclusion}; if\, $p_0 \ge p_1$ and $q_0 \le q_1$,
\begin{equation} \label{inclusion}
W(p_0, q_0) \subset W(p_1, q_1).
\end{equation}
\item
\textit{Convolution}\footnote{More generally, if\, $A_0 * A_1 \subset A$ and $B_0 * B_1 \subset B$,
$W(A_0 , B_0) * W(A_1 , B_1) \subset W(A, B).$}; if $1/p + 1 = 1/p_0 + 1/p_1$ and $1/q + 1 = 1/q_0 + 1/q_1$,
\begin{equation}\label{y-ineq}
W(p_0, q_0) * W(p_1, q_1) \subset W(p, q).
\end{equation}
\item
\textit{Interpolation}\footnote{For $0<\theta<1$, $(\cdot\,,\cdot)_{[\theta]}$ denotes the complex interpolation functor and $(1/p_\theta,1/q_\theta)$ is usually given as
$1/p_\theta = \theta/p_0 + (1-\theta)/p_1$ and  $1/q_\theta = \theta/q_0 + (1-\theta)/q_1$.};
if\, $q_0 < \infty$ or $q_1 < \infty$,
\begin{equation} \label{inter}
(W(p_0, q_0), W(p_1, q_1))_{[\theta]} = W(p_\theta, q_\theta).
\end{equation}
\item
\textit{Duality}; if $p,q<\infty$,
\begin{equation}\label{dudu}
W(p,q)'=W(p',q').
\end{equation}
\end{itemize}
We refer to \cite{F,F2,F3,H} for details.

\subsection{Strichartz estimates in Wiener amalgam spaces}
Our main results for the Strichartz estimates are now stated as follows.

\begin{thm}\label{thm1}
Let $n\geq 3$ and $\frac n4<\sigma<\frac{n-1}2$.
Assume that $2\le \widetilde q<q<\infty$ and
\begin{equation}\label{ass}
\frac{2n}{n-2\sigma}< r\leq \widetilde r <
\begin{cases}
\frac{4}{n-4\sigma+1}\quad\text{if}\quad\sigma<\frac{n}4+\frac1{2(n-1)},\\
 \frac{2n}{n-2\sigma-1}\quad\text{if}\quad\sigma\geq\frac{n}4+\frac1{2(n-1)}.
\end{cases}
\end{equation}
Then we have
	\begin{equation}\label{T}
	\| e^{it\sqrt{-\Delta}} f\|_{W({\widetilde q} ,q)_t W({\widetilde r} ,r)_x} \lesssim \|f\|_{\dot H^{\sigma}}
	\end{equation}
if
\begin{equation} \label{c1}
	\frac{1}{\widetilde q} + \frac{n-1}{\widetilde r} > \frac{n}{2} -\sigma\quad\text{and}\quad
	\frac{1}{q} + \frac{n}{r} = \frac{n}{2}-\sigma.
	\end{equation}
\end{thm}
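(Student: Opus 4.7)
The natural strategy is to run the \(TT^{*}\) machinery adapted to Wiener amalgam spaces. Setting \(Sf=e^{it\sqrt{-\Delta}}|\nabla|^{-\sigma}f\), the desired bound \eqref{T} is equivalent to \(\|Sf\|_{W(\widetilde q,q)_{t}W(\widetilde r,r)_{x}}\lesssim\|f\|_{L^{2}}\); since all of \(q,r,\widetilde q,\widetilde r\) are finite, the duality \eqref{dudu} reduces this in turn to the \(SS^{*}\) bound
\[
\Bigl\|\int_{\mathbb{R}} e^{i(t-s)\sqrt{-\Delta}}|\nabla|^{-2\sigma}F(s,\cdot)\,ds\Bigr\|_{W(\widetilde q,q)_{t}W(\widetilde r,r)_{x}}\lesssim \|F\|_{W(\widetilde q',q')_{t}W(\widetilde r',r')_{x}}.
\]
The operator on the left is convolution in \(t\) with the kernel \(K_{t}(x)=\int_{\mathbb{R}^{n}} e^{ix\cdot\xi+it|\xi|}|\xi|^{-2\sigma}\,d\xi\), so the problem reduces to (i) a dispersive-type inequality
\(\|K_{t}\ast g\|_{W(\widetilde r,r)_{x}}\lesssim |t|^{-\beta}\|g\|_{W(\widetilde r',r')_{x}}\)
for an appropriate \(\beta\in(0,1)\), and (ii) a time-convolution estimate against \(|t|^{-\beta}\).

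For step (i) the plan is to pass to polar coordinates and carry out the angular integration, which yields
\[
K_{t}(x)=c_{n}\,|x|^{-(n-2)/2}\int_{0}^{\infty} e^{it\rho}\,\rho^{n/2-2\sigma}J_{(n-2)/2}(\rho|x|)\,d\rho.
\]
Splitting the radial integral according to \(\rho|x|\lesssim 1\) or \(\rho|x|\gtrsim 1\), I would invoke \(J_{\nu}(s)\sim s^{\nu}\) for small \(s\) and the asymptotic expansion \(J_{\nu}(s)=\sqrt{2/(\pi s)}\cos(s-\nu\pi/2-\pi/4)+O(s^{-3/2})\) for large \(s\). The leading high-frequency piece produces oscillatory integrals of the form \(\int_{0}^{\infty}e^{i\rho(t\pm|x|)}\rho^{(n-1)/2-2\sigma}\,d\rho\), whose pointwise behaviour is then extracted by integration by parts in \(\rho\) together with the standard homogeneous-distribution identities. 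The two cases in \eqref{ass} arise precisely at the threshold where \((n-1)/2-2\sigma\) crosses \(-1\), namely at \(\sigma=n/4+1/(2(n-1))\): below this value the decay of \(K_{t}\) is governed by cone-type oscillation, above it by the local singularity of the homogeneous amplitude.

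Granted the windowed bound on \(K_{t}\), Young's inequality \eqref{y-ineq} in the spatial variable converts it into the claimed Wiener-amalgam dispersive inequality, with \(\beta\) dictated by the scaling identity \(1/q+n/r=n/2-\sigma\). Step (ii) is then a Hardy-Littlewood-Sobolev type estimate in time: using \eqref{y-ineq} and the inclusion \eqref{inclusion}, convolution against \(|t|^{-\beta}\) maps \(W(\widetilde q',q')_{t}\) into \(W(\widetilde q,q)_{t}\) provided \(\beta\) sits strictly inside the sub-critical Hardy-Littlewood-Sobolev window, and the strict inequality \(1/\widetilde q+(n-1)/\widetilde r>n/2-\sigma\) encodes exactly this margin, while \(1/q+n/r=n/2-\sigma\) provides the correct global scaling.

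The main obstacle is step (i). Unlike the classical \(L^{p}\)-based dispersive estimate, the amalgam estimate must be simultaneously sharp in \(t\) and \(x\) and must interact correctly with the localization against translates of the window \(\tau_{y}\varphi\); the local and global behaviour of \(K_{t}\) are genuinely different, which is exactly why a single \(L^{p}_{x}\) norm is insufficient. Summing the windowed \(L^{\widetilde r}\) norms of \(K_{t}\ast g\) over translates is what forces the case split \eqref{ass}, and tracking the Bessel oscillation inside these finite windows --- without sacrificing the gain furnished by the asymptotic expansion --- is the most delicate technical point of the argument.
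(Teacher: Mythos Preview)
Your outline is the paper's route --- $TT^{*}$, Bessel representation of the kernel, spatial Young on amalgam spaces, and a Hardy--Littlewood--Sobolev step in time --- but step (i) as you have written it would not go through. There is no single $\beta\in(0,1)$ for which $\|K_{t}\ast g\|_{W(\widetilde r,r)_{x}}\lesssim|t|^{-\beta}\|g\|_{W(\widetilde r',r')_{x}}$; what the kernel actually obeys (Proposition~\ref{fix}) is a \emph{two-rate} bound
\[
\|K_{2\sigma}(\cdot,t)\|_{W(\widetilde r/2,r/2)_{x}}\lesssim
\begin{cases}
|t|^{-n+2\sigma+2(n-1)/\widetilde r},&|t|\le 1,\\
|t|^{-n+2\sigma+2n/r},&|t|\ge 1.
\end{cases}
\]
The scaling identity $1/q+n/r=n/2-\sigma$ pins the large-time exponent to $2/q$, placing the tail in $L^{q/2,\infty}$; the strict inequality $1/\widetilde q+(n-1)/\widetilde r>n/2-\sigma$ is not an ``HLS margin'' but exactly the condition that the small-time exponent is below $2/\widetilde q$, giving local $L^{\widetilde q/2}$ integrability at $t=0$. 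It is this pair of rates that puts $t\mapsto\|K_{2\sigma}(\cdot,t)\|_{W(\widetilde r/2,r/2)}$ into $W(L^{\widetilde q/2},L^{q/2,\infty})_{t}$ and triggers the amalgam convolution relation. A single-$\beta$ estimate would force $\widetilde r=r$, collapsing the result to the classical $L^{p}$ Strichartz range.

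Your explanation of the case split in \eqref{ass} is also off. The equation $(n-1)/2-2\sigma=-1$ gives $\sigma=(n+1)/4$, which equals $n/4+1/(2(n-1))$ only when $n=3$. The split in \eqref{ass} is purely about which of the two upper bounds on $\widetilde r$ is smaller --- they cross at $\sigma=n/4+1/(2(n-1))$ --- whereas the analytic dichotomy in the pointwise kernel estimate (Proposition~\ref{pw}) occurs at $2\sigma=(n+1)/2$, where the bound switches from the light-cone form $|x|^{-(n-1)/2}\bigl||x|-|t|\bigr|^{-(n+1)/2+2\sigma}$ to the radial form $|x|^{-(n-2\sigma)}$.
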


\begin{figure} [t]
 \begin{center}
  {\includegraphics[width=0.5\textwidth]{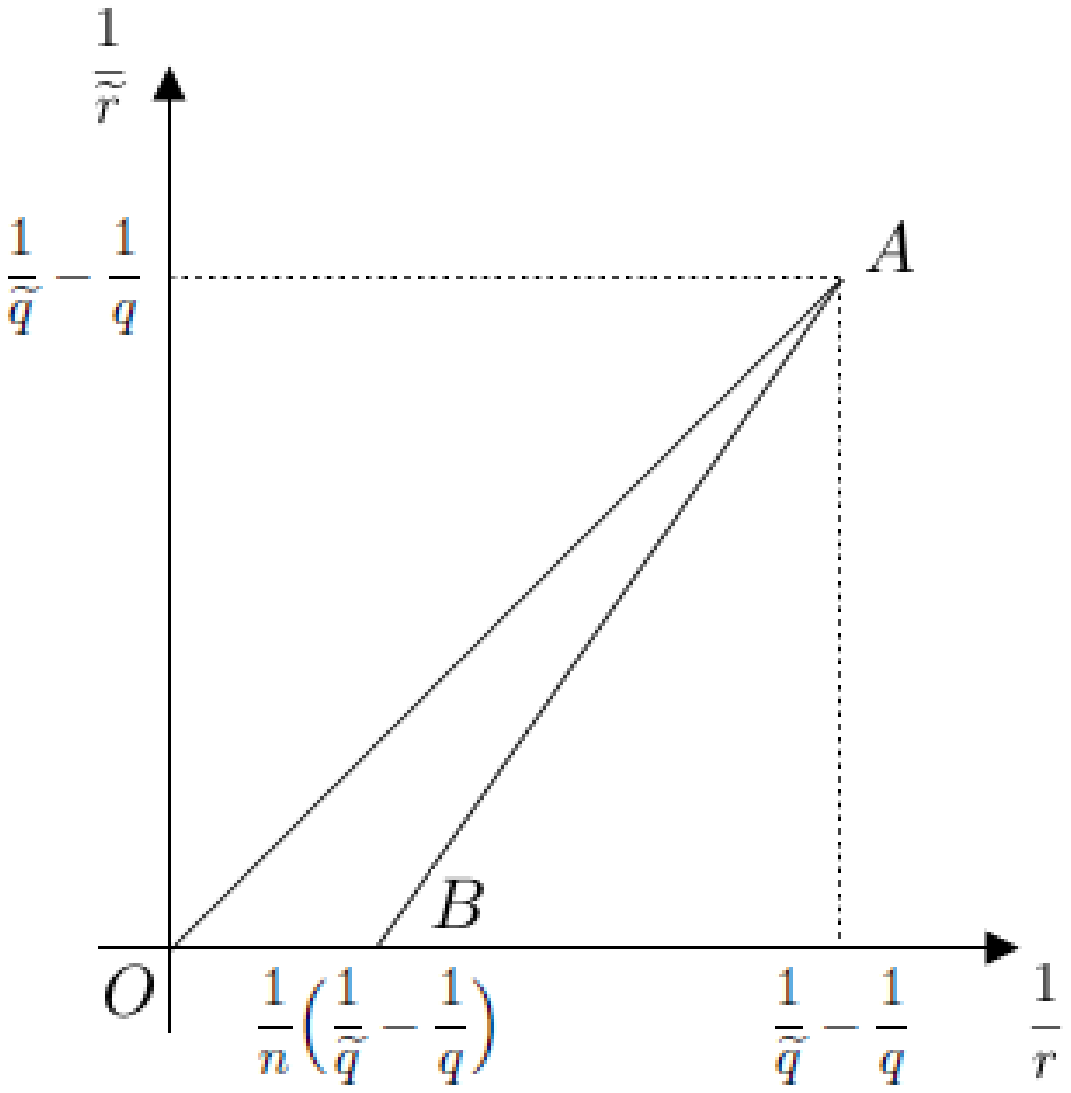}}
 \end{center}
 \caption{\label{figure1}The region of $(1/r,1/\widetilde{r})$ for \eqref{T}}
\end{figure}

\begin{rem}
Roughly speaking, the estimate \eqref{T} shows that the $W(\widetilde{r} ,r)_x$-norm of the propagator has a $L_t^q$-decay at infinity.
But the second condition in \eqref{c1} becomes equivalent to the scaling condition
for the classical estimates \eqref{classi}.
Hence, our estimates are better than the classical ones for large time
since the classical $L_x^r$ norm in \eqref{classi} is rougher than the $W(\widetilde{r} ,r)_x$-norm when $r<\widetilde{r}$ (see \eqref{inclusion}),
although locally the classical $L_t^q$ regularity is replaced by
$L_t^{\widetilde{q}}$ with $\widetilde{q}<q$.
In this regard, it is worth trying to obtain \eqref{T} especially when $r<\widetilde{r}$,
and the theorem shows that $(1/r,1/\widetilde{r})$ can lie in some region inside the triangle with vertices $O,A,B$ in Figure \ref{figure1}.
Note here that \eqref{c1} determines the line through $A$ and $B$, $1/\widetilde{q}-1/q=n/r-(n-1)/\widetilde{r}$.
Some estimates when $r>\widetilde{r}$ are of course  derived trivially from the classical ones \eqref{classi} using the inclusion relation \eqref{inclusion}.
\end{rem}

\begin{rem}
From complex interpolation (see \eqref{inter}) between \eqref{T} and the classical estimates \eqref{classi},
we can easily obtain further estimates on Wiener amalgam spaces extending the range of $\sigma$ to $0\le\sigma< n/2$.
In a different way, one can also trivially increase $q,r$ and diminish $\widetilde q, \widetilde r$ in \eqref{T}
using the inclusion relation \eqref{inclusion}.
\end{rem}

We also obtain the corresponding retarded estimates which are useful to control nonlinearities in relevant nonlinear problems
discussed below.

\begin{thm}\label{thm01}
Let $n\geq 3$ and $\gamma\in(\frac n2,\frac{n+1}2)\cup (\frac{n+1}{2},{n-1})$.
Assume that
\begin{equation*}
	0< \frac{1}{q}+\frac{1}{q_1} < \frac{1}{\widetilde q}+\frac{1}{\widetilde q_1} \leq 1 \quad (q\neq\infty)
\end{equation*}
and
\begin{equation*}
	\max\big\{\frac{n-2\gamma+1}{2},\,\frac{n-\gamma-1}{n}\big\}< \frac{1}{\widetilde r}+\frac{1}{\widetilde r_1} \leq \frac{1}{r}+\frac{1}{r_1} < \frac{n-\gamma}{n}.
\end{equation*}
Then we have
\begin{equation} \label{reta}
	\bigg\|\int_{0}^t e^{i(t-s)\sqrt{-\Delta}}|\nabla|^{-\gamma}F(\cdot,s)ds \bigg\|_{W(\widetilde q, q)_tW(\widetilde r , r)_x} \lesssim \|F\|_{W({\widetilde q}_1', q_1')_tW(\widetilde r_1' , r_1')_x}
\end{equation}
if
\begin{equation} \label{ic}
	\frac{1}{\widetilde q} +\frac{1}{\widetilde q_1} + \frac{n-1}{\widetilde r} +\frac{n-1}{\widetilde r_1} > n -\gamma \quad\text{and}\quad
	\frac{1}{q} +\frac{1}{q_1} + \frac{n}{r} + \frac{n}{r_1} = n-\gamma.
\end{equation}
\end{thm}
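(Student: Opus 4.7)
The plan is to derive \eqref{reta} by combining a time-decay mapping property for the operator $|\nabla|^{-\gamma}e^{it\sqrt{-\Delta}}$ on the spatial Wiener amalgam spaces with a one-dimensional Hardy--Littlewood--Sobolev-type convolution inequality on the time line, adapted to Wiener amalgam spaces. This follows the Keel--Tao philosophy, but carried out throughout in the amalgam setting.

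\textbf{Step 1 (spatial dispersive estimate).} Using the same representation as in the proof of Theorem \ref{thm1} --- write the kernel $K_t(x)$ of $|\nabla|^{-\gamma}e^{it\sqrt{-\Delta}}$ in polar coordinates as an oscillatory integral involving a Bessel function $J_{(n-2)/2}$ and exploit its asymptotic expansion --- I would first establish
\begin{equation*}
\bigl\||\nabla|^{-\gamma}e^{it\sqrt{-\Delta}} F\bigr\|_{W(\widetilde r, r)_x} \lesssim |t|^{-\beta}\,\|F\|_{W(\widetilde r_1',r_1')_x},
\qquad \beta = (n-\gamma) - \tfrac{n}{r}-\tfrac{n}{r_1},
\end{equation*}
valid under $\max\{(n-2\gamma+1)/2,\,(n-\gamma-1)/n\} < 1/\widetilde r+1/\widetilde r_1 \le 1/r+1/r_1 < (n-\gamma)/n$. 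The two endpoints for the maximum reflect the two regimes of the Bessel asymptotic: the Taylor regime $J_\nu(\rho)\sim\rho^\nu$ for small argument (controlling the global $r$-component through the condition $(n-\gamma-1)/n$) and the oscillatory regime $J_\nu(\rho)\sim\rho^{-1/2}\cos(\rho-\cdots)$ for large argument (controlling the local $\widetilde r$-component via the stationary-phase condition involving $(n-2\gamma+1)/2$). The splitting $\gamma\in(n/2,(n+1)/2)\cup((n+1)/2,n-1)$ ensures absolute convergence of $K_t$ at both the origin and infinity in the frequency variable.

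\textbf{Step 2 (time integration via amalgam HLS).} With the dispersive estimate in hand, Minkowski's inequality reduces \eqref{reta} to the one-dimensional convolution bound
\begin{equation*}
\bigl\||t|^{-\beta}\! * g(t)\bigr\|_{W(\widetilde q,\,q)_t} \lesssim \|g\|_{W(\widetilde q_1',\,q_1')_t}
\end{equation*}
for $g(t)=\|F(\cdot,t)\|_{W(\widetilde r_1',r_1')_x}$. The scaling equality in \eqref{ic} forces $\beta = 1/q+1/q_1$, so the exponents are homogeneous of the correct weight; and the strict inequality $1/\widetilde q+1/\widetilde q_1 > 1/q+1/q_1$ produces the local room needed. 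This bound is obtained by splitting $|t|^{-\beta}$ into its near-diagonal (local) and far-diagonal (global) parts and applying the amalgam convolution inequality \eqref{y-ineq} to each piece --- using a standard Young inequality locally and a weak-type Young inequality (i.e.\ $|t|^{-\beta}$ viewed as an element of a Lorentz-amalgam space) globally --- together with the interpolation \eqref{inter} and duality \eqref{dudu} as needed to hit the stated exponents.

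\textbf{Main obstacle.} The principal technical difficulty lies in Step 1. Unlike the Schr\"odinger case, the wave kernel $K_t(x)$ has no closed form, and producing the $W(\widetilde r_1',r_1')_x\to W(\widetilde r,r)_x$ bound with the sharp $|t|^{-\beta}$ decay in the amalgam setting requires a delicate decomposition of the Bessel-function integral into its near- and far-field regimes, each of which contributes asymmetrically to the local ($\widetilde r$-type) and global ($r$-type) parts of the amalgam norm; the Bessel cancellation that drives Theorem \ref{thm1} must be re-exploited here uniformly in $t$. Once this dispersive bound is secured, the amalgam-HLS step in time is essentially bookkeeping, and the strict inequality in the first half of \eqref{ic} is precisely what enables that step to close.
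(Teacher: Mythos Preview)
Your overall architecture matches the paper's proof in Section~\ref{sec5}: reduce to a space--time convolution with the kernel $K_\gamma$, apply Young/Minkowski in $x$, then an amalgam Hardy--Littlewood--Sobolev inequality in $t$. However, the dispersive input you state in Step~1 is wrong, and this is not a cosmetic issue.

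You claim a single-rate bound
\[
\bigl\||\nabla|^{-\gamma}e^{it\sqrt{-\Delta}}F\bigr\|_{W(\widetilde r,r)_x}\lesssim |t|^{-\beta}\|F\|_{W(\widetilde r_1',r_1')_x},
\qquad \beta=(n-\gamma)-\tfrac{n}{r}-\tfrac{n}{r_1}.
\]
Via Young's inequality this would follow from $\|K_\gamma(\cdot,t)\|_{W(\widetilde r_0,r_0)_x}\lesssim|t|^{-\beta}$ with $1/\widetilde r_0=1/\widetilde r+1/\widetilde r_1$, $1/r_0=1/r+1/r_1$. But Proposition~\ref{fix} (which is exactly the Bessel-function computation you allude to) gives \emph{two} rates:
\[
\|K_\gamma(\cdot,t)\|_{W(\widetilde r_0,r_0)_x}\lesssim
\begin{cases}
|t|^{-\beta'} & |t|\le 1,\\
|t|^{-\beta} & |t|\ge 1,
\end{cases}
\qquad
\beta'=(n-\gamma)-(n-1)\Bigl(\tfrac{1}{\widetilde r}+\tfrac{1}{\widetilde r_1}\Bigr).
\]
Since $\tfrac{1}{\widetilde r}+\tfrac{1}{\widetilde r_1}\le \tfrac{1}{r}+\tfrac{1}{r_1}$ and $n-1<n$, one has $\beta'>\beta$: the kernel is \emph{more} singular at $t=0$ than your $|t|^{-\beta}$ allows. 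Your single-rate estimate is therefore false for small $|t|$, and Step~2 as written cannot be fed the correct input.

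This also means you have misidentified where the first inequality in \eqref{ic} enters. With your (false) single rate, the local $L^{\widetilde q_0}$-integrability in Step~2 would require only $\beta<1/\widetilde q+1/\widetilde q_1$, i.e.\ $1/q+1/q_1<1/\widetilde q+1/\widetilde q_1$, which is already one of the hypotheses and is strictly weaker than the first half of \eqref{ic}. In the paper's (correct) argument, the local integrability condition is $\beta'<1/\widetilde q+1/\widetilde q_1$, and \emph{that} is precisely
\[
\tfrac{1}{\widetilde q}+\tfrac{1}{\widetilde q_1}+\tfrac{n-1}{\widetilde r}+\tfrac{n-1}{\widetilde r_1}>n-\gamma.
\]
So the first condition in \eqref{ic} is forced by the genuine short-time singularity of $K_\gamma$ in the amalgam norm, not by bookkeeping in the time-HLS step. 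The fix is simply to replace your Step~1 by the two-rate estimate of Proposition~\ref{fix} and then show (as the paper does, mirroring \eqref{local_t}) that the resulting function of $t$ lies in $W(L^{\widetilde q_0},L^{1/\beta,\infty})_t$.
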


\begin{rem}
It is well known that some retarded estimates can be derived from the homogeneous estimates using
the $TT^\ast$ argument and the Christ-Kiselev lemma \cite{CK}.
But this standard method is not accessible in the context of Wiener amalgam spaces because of lack of the corresponding lemma,
and therefore we need to approach the matter more directly.
\end{rem}

\subsection{Application to nonlinear wave equations}
Now we turn to a few applications of our estimates to local well-posedness of the Cauchy problem for nonlinear wave equations
\begin{equation} \label{eq}
	\begin{cases}
		\partial_t^2 u - \Delta u = F_k(u), \\
		u(x,0)=f(x)\in\dot{H}^\sigma,\\
\partial_tu(x,0)=g(x)\in\dot{H}^{\sigma-1},
	\end{cases}
\end{equation}
where $(x,t)\in\mathbb{R}^n\times\mathbb{R}$
and the nonlinearity $F_k \in C^1(\mathbb{R})$ ($k>1$) satisfies
\begin{equation} \label{cF}
	|F_k(u)| \lesssim |u|^k \quad\text{and}\quad |u||F_k'(u)| \sim |F_k(u)|.
\end{equation}
The typical models are when $F_k(u) =\pm u^k$ or $\pm|u|^{k-1}u$.

The problem of determining the largest $k$ for which \eqref{eq} is locally well-posed was addressed for
higher dimensions $n\geq3$ in \cite{K}, and then almost completely answered (\cite{L,LS,KT});
the problem \eqref{eq} is ill-posed when
\begin{equation}\label{larg}
k>k(\sigma)=
\begin{cases}
1+\frac4{n+1-4\sigma}\quad\text{if}\quad\sigma\le1/2,\\
1+\frac4{n-2\sigma}\quad\text{if}\quad\sigma\ge1/2.
\end{cases}
\end{equation}
The endpoint case $\sigma=0$ ($k=2$) when $n=3$ is also ill-posed.
When $\sigma\geq\frac{n-3}{2(n-1)}$, it is known that $k(\sigma)$ given by \eqref{larg} is indeed best-possible,
but for $\sigma<\frac{n-3}{2(n-1)}(<1/2)$ the sharpness is not yet known.
The piecewise smooth curve in Figure \ref{figure} describes the maximal $k$ particularly when $n=3$.

\begin{figure} [t]
	\begin{center}
		{\includegraphics[width=0.5\textwidth]{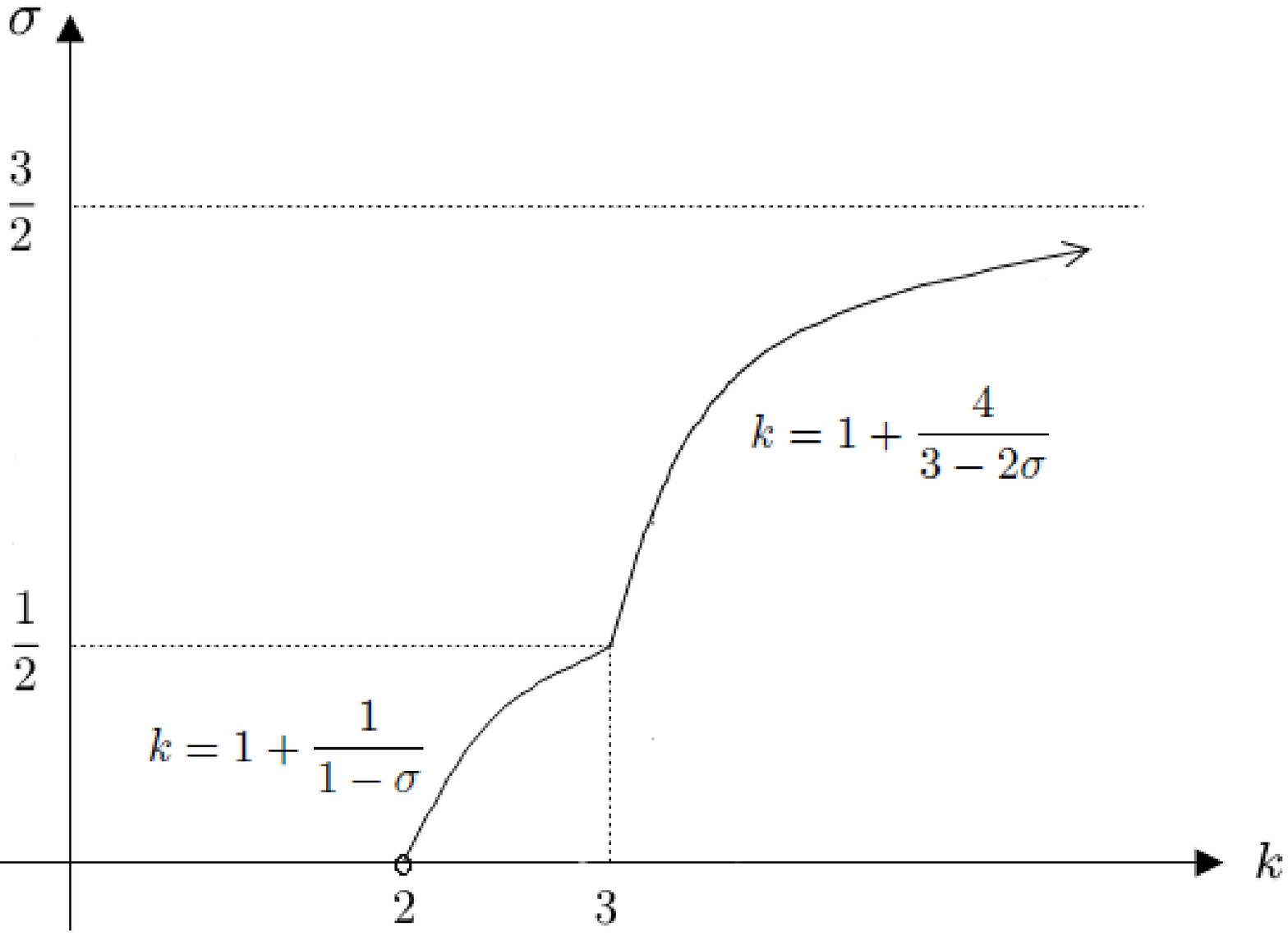}}
	\end{center}	
	\caption{\label{figure}The maximal $k(\sigma)$ for $n=3$}
\end{figure}

The second aim in this paper is to study this nonlinear problem in the context of Wiener amalgam spaces
by making use of our estimates.
The motivation behind this is that
these spaces as solution spaces can control the local regularity of the solution and its decay at infinity separately.
This can lead to a finer analysis of the local and global behavior of the solution.
Since the case $\sigma\geq1/2$ was settled for all dimensions $n\ge3$,
we shall only discuss in detail the low-regularity case $\sigma\le1/2$
although the former case can be also handled in the same way.
We shall also restrict ourselves to three physical dimension $n=3$ just for brevity
since the proof of our existence results for higher dimensions $n\geq4$ follows similar lines.

\begin{thm}\label{thmthm}
  Let $0<\sigma\le1/2$ and $1 < k <k(\sigma)$.
  Then there exist a time $T>0$ and a unique solution $u$ to \eqref{eq}
  with
  \begin{equation}\label{qaz0}
  u \in W(\widetilde q , q)_t ([0,T];W(\widetilde r, r)_x (\mathbb{R}^3))
  \end{equation}
  where
  \begin{equation}\label{qaz2}
\begin{gathered}
0<\frac{1}{q} < \frac{1}{\widetilde q} \leq \frac{1}{4},
\quad \frac{1}{4} < \frac{1}{\widetilde r} \le \frac{1}{r}<\frac{1}{2},\\
1\leq \frac{1}{q}+\frac{3}{r} < \min\big\{\frac32-\frac2{\widetilde{q}},\,\frac2{\widetilde{q}}+\frac4{\widetilde{r}}-\frac12\big\}.
\end{gathered}
\end{equation}
Furthermore, the solution satisfies
  \begin{equation}\label{data}
   u\in C_t([0,T];\dot{H}^{\sigma})\cap C_t^1([0,T];\dot{H}^{\sigma-1})
  \end{equation}
and depends continuously (in the norms \eqref{qaz0},\eqref{data}) on the data.
  \end{thm}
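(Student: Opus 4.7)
We run the standard Duhamel / contraction-mapping scheme, adapted to the Wiener amalgam setting. Writing
\[
\Phi u(t)=\cos(t\sqrt{-\Delta})f+\frac{\sin(t\sqrt{-\Delta})}{\sqrt{-\Delta}}g+\int_0^t\frac{\sin((t-s)\sqrt{-\Delta})}{\sqrt{-\Delta}}F_k(u(s))\,ds,
\]
we seek a fixed point of $\Phi$ in the closed ball
\[
X_T=\{u\in W(\widetilde q,q)_t([0,T];W(\widetilde r,r)_x(\mathbb{R}^3)):\|u\|_{X_T}\le \rho\}
\]
of size $\rho\sim\|f\|_{\dot H^\sigma}+\|g\|_{\dot H^{\sigma-1}}$. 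After decomposing $\cos(t\sqrt{-\Delta})$ and $\sin(t\sqrt{-\Delta})/\sqrt{-\Delta}$ into half-wave propagators $e^{\pm it\sqrt{-\Delta}}$, the linear part of $\Phi u$ is bounded in $X_T$ by $\|f\|_{\dot H^\sigma}+\|g\|_{\dot H^{\sigma-1}}$ via Theorem~\ref{thm1}, together with its complex-interpolation extension with \eqref{classi} which covers the low-regularity range $0<\sigma\le 1/2$ (see the remark following Theorem~\ref{thm1}). A routine algebraic check shows that the hypotheses in \eqref{qaz2} imply the admissibility conditions \eqref{ass}--\eqref{c1} (or their interpolated versions): the equality $1/q+3/r=3/2-\sigma$ is exactly the scaling relation in \eqref{c1}, while the two upper bounds in \eqref{qaz2} encode the strict decay inequality in \eqref{c1} and the spatial range \eqref{ass}.

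For the Duhamel term, Theorem~\ref{thm01} cannot be applied with $\gamma=1$ since the admissible range is $\gamma\in(3/2,2)$ when $n=3$. We therefore factor $|\nabla|^{-1}=|\nabla|^{-\gamma}|\nabla|^{\gamma-1}$ for an auxiliary $\gamma\in(3/2,2)$ to be chosen, and apply Theorem~\ref{thm01} with some auxiliary exponents $(\widetilde q_1,q_1,\widetilde r_1,r_1)$ satisfying \eqref{ic}; this reduces the Duhamel bound to the nonlinear estimate
\[
\||\nabla|^{\gamma-1}F_k(u)\|_{W(\widetilde q_1',q_1')_tW(\widetilde r_1',r_1')_x}\lesssim T^{\alpha}\|u\|_{X_T}^k
\]
for some $\alpha>0$. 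This is obtained from a fractional Leibniz / chain rule applied to $F_k(u)\sim u^k$ (putting $|\nabla|^{\gamma-1}$ onto a single factor while leaving $k-1$ pure copies of $u$), Hölder's inequality in the amalgam structure (see \eqref{y-ineq} and its pointwise counterpart for the local $L^{\widetilde r}$ factor), and a Sobolev-type embedding relating $|\nabla|^{\gamma-1}u$ back to an amalgam norm with the $X_T$ exponents. The small positive power $T^\alpha$ is harvested from the strict inequalities $1/q<1/\widetilde q$ and the strict upper bounds in \eqref{qaz2} via Hölder in the time variable on $[0,T]$.

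Precisely the two upper bounds in \eqref{qaz2} (those involving $3/2-2/\widetilde q$ and $2/\widetilde q+4/\widetilde r-1/2$), together with the subcritical restriction $k<k(\sigma)$ in \eqref{larg}, provide the slack needed to choose admissible $(\gamma,\widetilde q_1,q_1,\widetilde r_1,r_1)$ that make the scaling in the nonlinear estimate balance. The parallel Lipschitz bound for $F_k(u)-F_k(v)$ follows from $|F'_k(u)|\lesssim|u|^{k-1}$, so $\Phi$ is a strict contraction on $X_T$ for $T$ small enough; Banach's theorem yields the unique solution in $X_T$. Membership in $C_t([0,T];\dot H^\sigma)\cap C_t^1([0,T];\dot H^{\sigma-1})$ and continuous dependence on the data follow by one more application of Theorems~\ref{thm1} and \ref{thm01} to differences of solutions.

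The \emph{main obstacle} lies in the last paragraph: one must check that the parameter window described by \eqref{qaz2} is simultaneously compatible with the admissibility ranges of Theorems~\ref{thm1} and \ref{thm01} and with the Hölder / Sobolev budget required by the fractional Leibniz step, all strictly below the scaling threshold $k(\sigma)$. This is exactly where the piecewise structure of $k(\sigma)$ in \eqref{larg} enters decisively, and it is the only place where the argument is genuinely sensitive to the low-regularity hypothesis $\sigma\le 1/2$.
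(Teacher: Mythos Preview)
Your contraction-mapping scheme and your treatment of the homogeneous terms are essentially what the paper does (the paper packages the interpolation with the classical estimate \eqref{classi} as a separate Corollary, but the content is the same). The gap is in your treatment of the Duhamel term.

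You correctly observe that Theorem~\ref{thm01} does not apply directly with $\gamma=1$ when $n=3$, but your proposed remedy---writing $|\nabla|^{-1}=|\nabla|^{-\gamma}|\nabla|^{\gamma-1}$ with $\gamma\in(3/2,2)$ and pushing the extra $|\nabla|^{\gamma-1}$ onto $F_k(u)$ via a fractional chain rule---does not close. After the Leibniz step you need to control $\||\nabla|^{\gamma-1}u\|$ in some amalgam norm, and you propose to do this by ``a Sobolev-type embedding relating $|\nabla|^{\gamma-1}u$ back to an amalgam norm with the $X_T$ exponents.'' But Sobolev embeddings trade derivatives \emph{for} integrability, not the reverse: the iteration space $X_T$ carries no derivatives of $u$, so there is no way to recover $\||\nabla|^{\gamma-1}u\|$ from $\|u\|_{X_T}$. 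Trying instead to enlarge the iteration space to include such a norm forces the homogeneous estimate to produce $|\nabla|^{\gamma-1}e^{it\sqrt{-\Delta}}f$ in an amalgam space, which would need $f\in\dot H^{\sigma+\gamma-1}$ with $\sigma+\gamma-1>1/2$, contradicting the low-regularity hypothesis $\sigma\le 1/2$.

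The paper avoids this entirely: rather than factoring $|\nabla|^{-1}$, it obtains a retarded estimate with $\gamma=1$ \emph{directly} by complex interpolation between Theorem~\ref{thm01} (with $\gamma$ near $2$) and the classical $L^q_tL^r_x$ retarded estimate (with $\gamma=2\sigma$ near $0$), using Lemma~\ref{inS} for the Sobolev scale. Once $\gamma=1$ is available, no derivatives fall on the nonlinearity and the estimate $\|F_k(u)\|\lesssim\||u|^k\|=\|u\|^k$ is pure H\"older in the amalgam exponents, with the small power of $T$ coming from a time-H\"older factor. This is the missing idea in your argument.
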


From the proof one can give a precise estimate for the life span of the solution according to the size of the initial data as
$$ T \sim (\|f\|_{\dot H^{\sigma}}+\|g\|_{\dot H^{\sigma-1}})^{-c(k-1)}$$
for some $c=c(q,\widetilde{q},r,\widetilde{r},k)>0$ (see \eqref{M}).
Once the local existence is shown for small $T$, it holds therefore for any finite time $T$ by a straightforward iteration using
$$\sup_{t\in[0,T]}\|u\|_{\dot{H}^\sigma}+\sup_{t\in[0,T]}\|u\|_{\dot{H}^{\sigma-1}}\lesssim \|f\|_{\dot{H}^\sigma}+\|g\|_{\dot{H}^{\sigma-1}}$$
(see \eqref{CC1},\eqref{CC2}).

\

\noindent\textit{Outline of the paper.}
In Section \ref{sec2} we prove Theorem \ref{thm1}; using the $TT^\ast$ argument we first rephrase \eqref{T} as
\begin{equation*}
\bigg\| \int_{\mathbb{R}} (K_{2\sigma}(\cdot,t-s) * F(\cdot,s))(x) ds \bigg\|_{W(\widetilde q ,q)_t W(\widetilde r,r)_x} \lesssim \|F\|_{W(\widetilde q' ,q')_t W(\widetilde r',r')_x}
\end{equation*}
where $K_{\gamma}$ denotes the integral kernel of the Fourier multiplier $|\nabla|^{-\gamma}e^{it\sqrt{-\Delta}}$ given as in \eqref{ker},
and then make use of the convolution relation to reduce the matter to
$
\| K_{2\sigma} \|_{W(L^{\frac{\widetilde q}{2}}, L^{\frac q2,\infty})_tW(\frac{\widetilde r}{2},\frac{r}{2})_x} < \infty.
$
This is carried out by estimating the time-decay estimates
$\| K_{2\sigma}(\cdot,t) \|_{W(\frac{\widetilde r}{2},\frac{r}{2})_x}\lesssim|t|^{-\omega}$
with suitable $\omega(r,\widetilde{r},\sigma,n)>0$ to insure $|t|^{-\omega}\in W(L^{\frac{\widetilde q}{2}}, L^{\frac q2,\infty})_t$
under the second condition in \eqref{c1}.
To obtain such decay estimates (Proposition \ref{fix}),
we first rephrase the kernel in Section \ref{sec3} as an oscillatory integral like \eqref{K} involving Bessel functions
to carefully estimate it making use of cancellation in such integrals based on the asymptotic expansion of Bessel functions,
and then use these estimations (Proposition \ref{pw}) to calculate the time-decay bounds in Section \ref{sec4}.
Section \ref{sec5} is devoted to proving the corresponding retarded estimates in Theorem \ref{thm01},
and finally we apply the Strichartz estimates to the nonlinear problem \eqref{eq}
to prove Theorem \ref{thmthm} in the final section, Section \ref{sec6}.

\

Throughout this paper, the letter $C$ stands for a positive constant which may be different
at each occurrence.
We also denote $A\lesssim B$ to mean $A\leq CB$ with unspecified constants $C>0$.

\section{Homogeneous estimates} \label{sec2}
In this section we prove Theorem \ref{thm1}.
To prove \eqref{T}, we can apply the standard TT* argument because of the H\"older's type inequality
\begin{equation*}
|\langle F,G \rangle_{L^2_{x,t}}|
\leq \|F\|_{W(\widetilde q, q)_t W(\widetilde r, r)_x} \|G\|_{W({\widetilde q}' , q')_t W({\widetilde r}', r')_x}
\end{equation*}
which can be proved directly from the definition of Wiener amalgam spaces.
Hence it is enough to show that
\begin{equation}\label{TT*}
\bigg\| \int_\mathbb{R} |\nabla|^{-2\sigma} e^{i(t-s)\sqrt{-\Delta}}  F(\cdot,s) ds \bigg\|_{W(\widetilde q , q)_t W(\widetilde r , r)_x}
\lesssim \|F\|_{W(\widetilde q' , q')_t W(\widetilde r' , r')_x}.
\end{equation}
We first write the integral kernel of the Fourier multiplier $|\nabla|^{-\gamma}e^{it\sqrt{-\Delta}}$ as
\begin{equation} \label{ker}
K_{\gamma} (x,t) :=\frac{1}{(2\pi)^n}\int_{\mathbb{R}^n} e^{i(x \cdot \xi + t|\xi|)} \frac{d\xi}{|\xi|^{\gamma}}.
\end{equation}
Then \eqref{TT*} is rephrased as follows:
\begin{equation}\label{conv}
\bigg\| \int_{\mathbb{R}} (K_{2\sigma}(\cdot,t-s) * F(\cdot,s))(x) ds \bigg\|_{W(\widetilde q ,q)_t W(\widetilde r,r)_x} \lesssim \|F\|_{W(\widetilde q' ,q')_t W(\widetilde r',r')_x}.
\end{equation}

Now we will prove \eqref{conv}.
By Minkowski's inequality and the convolution relation \eqref{y-ineq},
it follows that
\begin{align}\label{bfHLS}
\nonumber \bigg\| \int_{\mathbb{R}} (K_{2\sigma} (\cdot,t-s)* &F(\cdot,s))(x) ds \bigg\|_{W(\widetilde q ,q)_t W(\widetilde r ,r)_x} \\
\nonumber
&\le \bigg\| \int_{\mathbb{R}} \|K_{2\sigma}(\cdot,t-s) * F(\cdot,s)\|_{W(\widetilde r ,r)_x} ds \bigg\|_{W(\widetilde q,q)_t} \\
&\le \bigg\| \int_{\mathbb{R}} \|K_{2\sigma}(\cdot,t-s)\|_{W(\frac{\widetilde r}{2} ,\frac{r}{2})_x} \|F(\cdot,s)\|_{W(\widetilde r',r')_x} ds\bigg\|_{W(\widetilde q ,q)_t}.
\end{align}
Recall the Hardy-Littlewood-Sobolev fractional integration theorem (see e.g. \cite{St}, p. 119) in dimension $1$:
\begin{equation}\label{hls}
L^{\frac{1}{\alpha},\infty}\ast L^p\hookrightarrow L^q
\end{equation}
for $0<\alpha < 1$ and $1\le p<q<\infty$ with $\frac{1}{q} +1 = \frac{1}{p} + \alpha$.
Using \eqref{hls} with $\alpha = \frac{2}{q}$ and $p=q'$
and the usual Young's inequality, the convolution relation again gives
\begin{equation*}
W(L^{\frac{\widetilde q}2}, L^{\frac q2, \infty})_t * W(\widetilde q', q')_t \subset W(\widetilde q, q )_t
\end{equation*}
for $2\leq\widetilde{q}\leq\infty$ and $2<q<\infty$.
Hence we get
\begin{align}\label{bfHLS2}
\nonumber\bigg\| \int_{\mathbb{R}} \|K_{2\sigma}(\cdot,t-s)\|_{W(\frac{\widetilde r}{2} ,\frac{r}{2})_x} &\|F(\cdot,s)\|_{W(\widetilde r' ,r')_x} ds\bigg\|_{W(\widetilde q ,q)_t}\\
\lesssim &
\| K_{2\sigma} \|_{{W(L^{\frac{\widetilde q}{2}} ,  L^{\frac q2,\infty})_t}{W(\frac{\widetilde r}{2} ,\frac{r}{2})_x}}
\|F\|_{W(\widetilde q' ,q')_t W(\widetilde r',r')_x}.
\end{align}
Combining \eqref{bfHLS} and \eqref{bfHLS2}, we now obtain the desired estimate \eqref{conv} if
\begin{equation}\label{t}
\| K_{2\sigma} \|_{{W(L^{\frac{\widetilde q}{2}}, L^{\frac q2,\infty})_t}{W(\frac{\widetilde r}{2},\frac{r}{2})_x}} < \infty
\end{equation}
for $(\widetilde q , \widetilde r)$ and $(q,r)$ satisfying the same conditions as in Theorem \ref{thm1}.

To show \eqref{t}, we use the following time-decay estimates for the integral kernel \eqref{ker}
which will be obtained in later sections.

\begin{prop} \label{fix}
Let $n\ge 3$ and $\gamma\in(\frac n2,\frac{n+1}2)\cup(\frac{n+1}2,n-1)$.
Assume that
\begin{equation} \label{ftc}
\frac{2n}{n-\gamma}<r\le \widetilde r <
\begin{cases}
\frac{4}{n-2\gamma+1}\quad\text{if}\quad\gamma<\frac n2+\frac1{n-1},\\
\frac{2n}{n-\gamma-1}\quad\text{if}\quad\gamma\ge\frac n2+\frac1{n-1}.
\end{cases}
\end{equation}
Then we have
	\begin{equation}\label{x}
	\|K_{\gamma}(\cdot,t)\|_{W(\frac{\widetilde r}{2} ,\frac{r}{2})_x} \lesssim
	\begin{cases}
	|t|^{-n+\gamma + {2(n-1)}/{\widetilde r}} \quad\text{if} \quad |t| \le 1, \\
	|t|^{-n+\gamma + {2n}/{r}} \quad \text{if} \quad |t|  \ge1.
	\end{cases}
	\end{equation}	
\end{prop}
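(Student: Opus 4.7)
The plan is to deduce the Wiener amalgam estimate from a pointwise bound on $K_\gamma(x,t)$, which Proposition \ref{pw} is designed to provide. After passing to polar coordinates in \eqref{ker} and invoking the spherical Fourier transform identity, one writes
$$K_\gamma(x,t) = c_n |x|^{-(n-2)/2} \int_0^\infty \rho^{n/2-\gamma} J_{(n-2)/2}(\rho|x|) e^{i\rho t}\, d\rho,$$
and then the asymptotic expansion of $J_{(n-2)/2}$ reduces matters to one-dimensional oscillatory integrals of the form $\int_0^\infty \rho^{(n-1)/2-\gamma} e^{i\rho(t\pm|x|)}\,d\rho$. Repeated integration by parts away from the light cone $\{|x|=|t|\}$ together with direct estimation on and near it should yield the pointwise bound. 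The delicate balance between the amplitude $\rho^{(n-1)/2-\gamma}$ and the phase is what determines the two upper bounds in \eqref{ftc}, separated at $\gamma=n/2+1/(n-1)$.

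Taking this pointwise bound for granted, I would evaluate $\|K_\gamma(\cdot,t)\|_{W(\widetilde{r}/2,r/2)_x}$ by partitioning the variable $x_0$ (the translate in the Wiener amalgam definition) into three pieces: (i) $|x_0|\le |t|/2$, (ii) $||x_0|-|t||\le 1$, and (iii) $|x_0|\ge |t|+1$. In regions (i) and (iii) the translated window $\tau_{x_0}\varphi$ stays away from the cone, so the pointwise decay gives a uniform local $L^{\widetilde{r}/2}$-bound that is summed in $L^{r/2}(dx_0)$ by a plain volume computation; the lower bound $r>2n/(n-\gamma)$ in \eqref{ftc} is precisely what makes the tail in region (iii) $L^{r/2}$-integrable at infinity. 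Region (ii) is the critical unit-thick annulus on which $\tau_{x_0}\varphi$ encounters the cone singularity, and the upper bounds on $\widetilde{r}$ in \eqref{ftc} are the thresholds that render this local singularity $L^{\widetilde{r}/2}$-integrable.

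The two regimes in \eqref{x} then arise from the scaling of region (ii) against region (i). When $|t|\le 1$ the cone collapses inside a unit neighborhood of the origin, and the separation between the local and global layers of the Wiener amalgam norm disappears: the cost is essentially the local $L^{\widetilde{r}/2}$-norm of the cone singularity, producing the factor $|t|^{-n+\gamma+2(n-1)/\widetilde{r}}$. When $|t|\ge 1$ the cone spreads over an annulus of $\sim |t|^{n-1}$ unit cells, and $L^{r/2}$-summation of the local norms over this annulus contributes a factor $|t|^{2(n-1)/r}$ which, combined with the pointwise amplitude on the cone, rearranges into $|t|^{-n+\gamma+2n/r}$.

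The main obstacle will be the sharp analysis on region (ii): one must quantify the cone singularity of $K_\gamma$ at the exact level of unit-scale $L^{\widetilde{r}/2}$-bounds, which is precisely what the Bessel cancellation encoded by Proposition \ref{pw} delivers, and then confirm that the $L^{r/2}$-sum over the cone annulus matches the stated rate without logarithmic losses at the endpoints of \eqref{ftc}. The dichotomy $\gamma \gtrless n/2+1/(n-1)$ reflects whether the worst contribution to the cone singularity comes from the leading Bessel asymptotic or from a subleading term, and will have to be traced through Proposition \ref{pw} with care.
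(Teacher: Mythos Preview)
Your overall plan---use the pointwise bounds of Proposition~\ref{pw} and then integrate over translates---is exactly the paper's approach. However, your diagnosis of where the upper bound on $\widetilde r$ comes from is off, and this affects how the computation must be organized.

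For $\gamma\in(\tfrac{n+1}{2},n-1)$, Proposition~\ref{pw} gives $|K_\gamma(x,t)|\lesssim|x|^{-(n-\gamma)}$ for $|x|\ge|t|/2$ and $|K_\gamma(x,t)|\lesssim|t|^{-1}|x|^{-(n-\gamma-1)}$ for $|x|\le|t|/2$: there is \emph{no cone singularity at all} in this range. The constraint $\widetilde r<\tfrac{2n}{n-\gamma-1}$ is precisely the condition that the origin singularity $|x|^{-(n-\gamma-1)}$ lie in $L^{\widetilde r/2}_{\mathrm{loc}}$; in the paper this appears as $\alpha:=\tfrac{\widetilde r}{2}(-n+\gamma+1)+n>0$, needed to make $\int_0^{c}\rho^{\alpha-1}\,d\rho$ finite. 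Only for $\gamma\in(\tfrac{n}{2},\tfrac{n+1}{2})$ does a cone singularity $\big||x|-|t|\big|^{-((n+1)/2-\gamma)}$ enter the pointwise bound, and then the \emph{additional} constraint $\widetilde r<\tfrac{4}{n-2\gamma+1}$ is what renders it locally $L^{\widetilde r/2}$-integrable. The threshold $\gamma=\tfrac{n}{2}+\tfrac{1}{n-1}$ in \eqref{ftc} is simply where these two upper bounds on $\widetilde r$ cross, not a transition between leading and subleading Bessel asymptotics.

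Your partition of the translate variable also has a gap: the annulus $|t|/2<|x_0|<|t|-1$ is omitted, and for large $|t|$ it carries a nontrivial share of the $L^{r/2}$-mass. In the paper the primary split is at $|x_0|\sim|t|/2$ (where the pointwise bound changes form), with secondary splits near $|x_0|\sim|t|$ introduced only in the low-$\gamma$ case to isolate the cone. With that organization the two rates in \eqref{x} emerge as you describe.
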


To begin with, we set $h(t)= \|K_{2\sigma}(\cdot,t)\|_{W(\frac{\widetilde r}{2} ,\frac{r}{2})_x}$
and choose $\varphi(t) \in C_0^{\infty}(\mathbb{R})$ supported on $\{t\in\mathbb{R}:|t|\le1\}$.
To calculate $\| h \|_{{W(L^{\frac{\widetilde q}2}, L^{\frac{q}{2},\infty})_t}}$ using \eqref{x},
we handle $\|h \tau_k \varphi \|_{L_t^{\widetilde q/2}}$ dividing cases into $|k|\le 1$, $1\le|k|\le 2$ and $|k|\ge 2$.
First we consider the case $|k|\le 1$. By using \eqref{x} with $\gamma=2\sigma$ and the support condition of $\varphi$,
\begin{equation}\label{dfg}
\|h \tau_k \varphi \|_{L_t^{\tilde q/2}}^{\widetilde q/2}
\lesssim \int_{0<|t|\le1} |t|^{\frac{\widetilde q}{2}(-n+2\sigma + \frac{2(n-1)}{\widetilde r})} dt + \int_{1\le|t|\le|k|+1} |t|^{\frac{\widetilde q}{2}(-n+2\sigma + \frac{2n}{r})} dt.
\end{equation}
Since $\frac{\widetilde q}{2} (-n+2\sigma + \frac{2(n-1)}{\widetilde r})+1>0$ by the first condition in \eqref{c1},
the first integral in \eqref{dfg} is trivially finite.
The second integral is bounded as follows:
\begin{align}\label{ert}
\nonumber\int_{1\le|t|\le|k|+1} |t|^{\frac{\widetilde q}{2}(-n+2\sigma + \frac{2n}{r})} dt
&\lesssim\frac{(|k|+1)^{\frac{\widetilde q}{2}(-n+2\sigma + \frac{2n}{r})+1} - 1 } {\frac{\widetilde q}{2}(-n+2\sigma + \frac{2n}{r})+1}\\
&\lesssim |k|.
\end{align}
Indeed, since ${\frac{\widetilde q}{2}(-n+2\sigma + \frac{2n}{r})}<0$
by the first inequality in the condition \eqref{ass}, the second inequality in \eqref{ert} follows easily from the mean value theorem.
Hence we get
\begin{equation*}
\|h \tau_k \varphi \|_{L_t^{\widetilde q/2}}^{\widetilde q/2} \lesssim 1
\end{equation*}
when $|k|\le 1$.
The other cases $1\le|k|\le 2$ and $|k|\ge 2$ are handled in the same way:

\begin{align}
\nonumber
\|h \tau_k \varphi \|_{L_t^{\widetilde q/2}}^{\widetilde q/2}
&\lesssim \int_{|k|-1\le|t|\le1} |t|^{\frac{\widetilde q}{2}(-n+2\sigma + \frac{2(n-1)}{\widetilde r})} dt + \int_{1\le|t|\le|k|+1} |t|^{\frac{\widetilde q}{2}(-n+2\sigma + \frac{2n}{r})} dt \\
\nonumber
&\lesssim 1+ |k| \\
\nonumber
&\lesssim 1
\end{align}
when $1\le|k|\le 2$, and when $|k|\ge 2$

\begin{align}
\nonumber
\|h \tau_k \varphi \|_{L_t^{\widetilde q/2}}^{\widetilde q/2}
&\lesssim \int_{{|k|-1}\le|t|\le{|k|+1}} |t|^{\frac{\widetilde q}{2}(-n+2\sigma + \frac{2n}{r})} dt \\
\nonumber
&\lesssim\frac{(|k|+1)^{\frac{\widetilde q}{2}(-n+2\sigma + \frac{2n}{r})+1} - (|k|-1)^{\frac{\widetilde q}{2}(-n+2\sigma + \frac{2n}{r})+1}}{\frac{\widetilde q}{2}(-n+2\sigma + \frac{2n}{r})+1} \\
\nonumber
&\lesssim (|k|-1)^{\frac{\widetilde q}{2}(-n+2\sigma + \frac{2n}{r})}.
\end{align}
Consequently, we get

\begin{equation} \label{local_t}
\|h\tau_k\varphi\|_{L_t^{\widetilde q/2}} \lesssim
\begin{cases}
1 \quad\textit{if}\quad |k| \leq 2,\\
(|k|-1)^{-(n-2\sigma - \frac{2n}{r})} \quad\textit{if}\quad |k| \geq 2.
\end{cases}
\end{equation}
By \eqref{local_t}, $\|h\tau_k\varphi\|_{L_t^{\widetilde q/2}}$  belongs to $L^{\frac{q}{2}, \infty}_k$
since we are assuming the second condition in \eqref{c1} which is equivalent to $\frac{2}{q} = n-2\sigma-\frac{2n}{r}$.
This finally implies
\begin{equation*}
\| h \|_{{W(L^{\frac{\widetilde q}{2}}, L^{\frac{q}{2},\infty})_t}}<\infty
\end{equation*}
for $(\widetilde q , \widetilde r)$ and $(q,r)$ satisfying the same conditions as in Theorem \ref{thm1}
except for the case $\sigma=(n+1)/4$ when $n\geq4$.
But this case can be shown by just interpolating the cases $\sigma=(n+1)/4-\varepsilon$ and $\sigma=(n+1)/4+\varepsilon$
with a sufficiently small $\varepsilon>0$.
Note finally that the condition $q>\widetilde{q}$ in the theorem follows immediately from combining \eqref{c1} and $r\le\widetilde{r}$.

\section{Pointwise estimates for the integral kernel} \label{sec3}
In this section we first obtain the following pointwise estimates for the integral kernel \eqref{ker}
which will be used in the next section to finish the proof of the time-decay estimates (Proposition \ref{fix}):

\begin{prop} \label{pw}
	Let $n\geq3$.
If $n\geq4$ and $(n+1)/2<\gamma<n-1$,
\begin{equation}\label{kerbd0}
|K_{\gamma}(x,t)|\lesssim
\begin{cases}
|x|^{-(n-\gamma)}\quad\text{if}\quad|x|\geq|t|/2,\\
|t|^{-1}|x|^{-(n-1-\gamma)}\quad\text{if}\quad\quad|x|\leq|t|/2,
\end{cases}
\end{equation}
and if $(n-1)/2< \gamma < (n+1)/2$
	\begin{equation}\label{kerbd}
	|K_{\gamma} (x,t) | \lesssim
	\begin{cases}
|x|^{-\frac{n-1}{2}} \big||x|-|t|\big|^{-(\frac{n+1}{2}-\gamma)} \quad {\text{if}} \quad |x| \geq |t|/2,\\
|t|^{-1}|x|^{-(n-1-\gamma)} \quad {\text{if}} \quad |x| \leq |t|/2.
	\end{cases}
	\end{equation}
(If $1/2<\gamma<1$, \eqref{kerbd} is still true for $n=2$).
\end{prop}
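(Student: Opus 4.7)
I would pass to polar coordinates in $\xi$ to rewrite $K_\gamma$ as a one-dimensional oscillatory integral whose angular factor is a Bessel function, then split it at the transition $\rho|x|\sim 1$ and use the asymptotic expansion of $J_{(n-2)/2}$ in the large-argument piece. Using the identity $\int_{S^{n-1}}e^{i\rho x\cdot\omega}\,d\omega=c_n(\rho|x|)^{-(n-2)/2}J_{(n-2)/2}(\rho|x|)$, the kernel becomes
\[
K_\gamma(x,t)=c_n|x|^{-(n-2)/2}\int_0^\infty \rho^{n/2-\gamma}\,e^{it\rho}\,J_{(n-2)/2}(\rho|x|)\,d\rho.
\]
Fix $\chi\in C_0^\infty$ with $\chi=1$ on $[-1,1]$ and $\chi=0$ outside $[-2,2]$, and split $K_\gamma=K_\gamma^0+K_\gamma^\infty$ via the cutoffs $\chi(\rho|x|)$ and $1-\chi(\rho|x|)$.

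For the low-frequency piece $K_\gamma^0$ I would use $|J_{(n-2)/2}(s)|\lesssim s^{(n-2)/2}$ on $s\le 2$, so the integrand is controlled by $\rho^{n-1-\gamma}$ on $\rho\in[0,2/|x|]$; a direct computation then gives $|K_\gamma^0|\lesssim|x|^{-(n-\gamma)}$, which is enough in the region $|x|\gtrsim|t|$. When $|x|\le|t|/2$, I would integrate by parts in $\rho$ against $e^{it\rho}$, trading $|t|^{-1}$ for one derivative on $\rho^{n/2-\gamma}J_{(n-2)/2}(\rho|x|)\chi(\rho|x|)$; boundary terms vanish because $\chi$ is compactly supported near $\rho\sim|x|^{-1}$ and $\rho^{n-1-\gamma}$ vanishes at $\rho=0$ (using $\gamma<n-1$). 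Tracking the powers yields $|K_\gamma^0|\lesssim|t|^{-1}|x|^{-(n-1-\gamma)}$, matching the second lines of \eqref{kerbd0} and \eqref{kerbd}.

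For $K_\gamma^\infty$ I would insert the Hankel-type asymptotic $J_{(n-2)/2}(s)=s^{-1/2}(a_+e^{is}+a_-e^{-is})+O(s^{-3/2})$ valid on $s\ge 1$; the remainder carries an extra $\rho^{-1}$ factor and is handled identically but more easily. The main contribution is the sum of the two oscillatory integrals
\[
I_\pm(x,t)=a_\pm|x|^{-(n-1)/2}\int_0^\infty \rho^{(n-1)/2-\gamma}(1-\chi(\rho|x|))\,e^{i\rho(t\pm|x|)}\,d\rho.
\]
When $|x|\le|t|/2$ both phases have modulus $\sim|t|$, and repeated integration by parts in $\rho$ again produces the bound $|t|^{-1}|x|^{-(n-1-\gamma)}$.

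The delicate regime is $|x|\ge|t|/2$, where the phase $\rho(t-|x|)$ in $I_-$ can be arbitrarily small. In $I_+$ one has $t+|x|\sim|x|$, so rescaling $\rho\mapsto\rho/|x|$ immediately gives $|I_+|\lesssim|x|^{-(n-\gamma)}$. For $I_-$ I would rescale by $\rho\mapsto\rho/||x|-|t||$; the resulting prefactor is $||x|-|t||^{-((n+1)/2-\gamma)}$. When $(n+1)/2<\gamma<n-1$ the exponent $(n-1)/2-\gamma$ is strictly below $-1$, so the rescaled integrand is absolutely integrable on the support of $1-\chi$ and a direct size estimate collapses the $||x|-|t||$ dependence, leaving $|I_-|\lesssim|x|^{-(n-\gamma)}$, which is \eqref{kerbd0}. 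When $(n-1)/2<\gamma<(n+1)/2$ the exponent $(n+1)/2-\gamma$ lies in $(0,1)$ and the rescaled integral reduces, via the conditionally convergent identity $\int_0^\infty\rho^{\alpha-1}e^{i\rho}d\rho=\Gamma(\alpha)e^{i\pi\alpha/2}$ (understood in the Abel sense), to a bounded constant; combined with the $|x|^{-(n-1)/2}$ prefactor this gives precisely \eqref{kerbd}. The hardest point will be bookkeeping at the boundary $\rho\sim|x|^{-1}$ separating the two pieces, in particular verifying that contributions from $\chi'$ and from the Bessel-asymptotic remainder are uniformly absorbed in the target bounds as $|x|$ approaches $|t|$, so that no spurious singularity appears along the light cone.
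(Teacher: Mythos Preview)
Your proposal is correct and follows essentially the same architecture as the paper's proof: polar coordinates to reduce to a Bessel integral, a split at $\rho|x|\sim 1$, the crude bound $|J_\nu(s)|\lesssim s^\nu$ on the small-argument piece, the asymptotic expansion on the large-argument piece, and integration by parts against $e^{it\rho}$ to extract the factor $|t|^{-1}$ in the region $|x|\le |t|/2$. The only substantive variation is in how you bound the key one-dimensional oscillatory integral $\int_{c}^{\infty}\rho^{-\alpha}e^{i\rho}\,d\rho$ uniformly in $c$ for $0<\alpha<1$: you rescale and invoke the conditionally convergent Mellin/Gamma identity, whereas the paper splits the integration domain into intervals of length $2\pi$ and uses the telescoping cancellation $\rho^{-\alpha}-(\rho+\pi)^{-\alpha}$ together with the mean value theorem. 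Both routes give the same bound $O(|A|^{\alpha-1})$; yours is slightly slicker but requires justifying that the cutoff piece $\int_0^{c}\rho^{-\alpha}e^{i\rho}\,d\rho$ stays bounded for $c\lesssim 1$, which you correctly flag as the main bookkeeping issue. One minor simplification: in the range $(n+1)/2<\gamma<n-1$ the rescaling by $||x|-|t||$ is unnecessary, since $\rho^{(n-1)/2-\gamma}$ is already absolutely integrable on $[|x|^{-1},\infty)$ and a direct size estimate (equivalently, just the bound $|J_\nu(s)|\lesssim s^{-1/2}$ without the full expansion) gives $|x|^{-(n-\gamma)}$ immediately—this is what the paper does.
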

\begin{proof}[Proof]
Using polar coordinates $\xi= \omega\xi'$ and $x=rx'$ where $\xi',x' \in \mathbb{S}^{n-1}$, $\omega = |\xi|$ and $r=|x|$,
we first write
\begin{align} \label{K}
\nonumber
K_{\gamma}(x,t) &= \frac{1}{(2\pi)^n} \int_{0}^{\infty} e^{it\omega} \omega^{n-\gamma-1}  \int_{\mathbb{S}^{n-1}} e^{ir\omega x' \cdot \xi'} d\xi' d\omega \\
&= C_n r^{-\frac{n-2}{2}} \int_0^{\infty} e^{it\omega} \omega^{\frac{n}{2}- \gamma}J_{\frac{n-2}{2}} (r\omega) d\omega.
\end{align}
Here we also used the fact (see, for example, \cite{G}, p. 428) that
\begin{equation*}
\int_{\mathbb{S}^{n-1}} e^{ir\omega x' \cdot \xi'} d\xi'
=C_n (r\omega)^{-\frac{n-2}{2}}J_{\frac{n-2}{2}} (r\omega)
\end{equation*}
where $J_\nu$ denotes the Bessel function of complex order $\nu$ with $\text{Re}\,\nu>-1/2$.

\subsection{The case $\frac{n+1}2<\gamma<n-1$}
In this case we will obtain
\begin{equation*}
|K_{\gamma}(x,t)|\lesssim \min\big\{|x|^{-n+\gamma},|t|^{-1}|x|^{-n+\gamma+1}\big\}
\end{equation*}
which implies \eqref{kerbd0}.
We first show $|K_{\gamma}(x,t)|\lesssim |x|^{-n+\gamma}$.
Motivated by the following fact (see, for example, \cite{G}, Appendix B) that for $\text{Re}\,\nu> -1/2$
	\begin{equation}\label{Bfbd}
	|J_{\nu}(m) | \le
	\begin{cases}
	C_{\nu} m^{\text{Re}\nu} \quad \text{if} \quad 0 < m < 1, \\
	C_{\nu} m^{-\frac{1}{2}} \quad \text{if} \quad m\geq 1,
	\end{cases}
	\end{equation}
we first split the integral in \eqref{K} into the regions $r\omega<1$ and $r\omega>1$:
\begin{equation}\label{C1}
\int_0^{1/r} e^{it\omega} \omega^{\frac{n}{2}-\gamma} J_{\frac{n-2}{2}}(r\omega) d\omega
+\int_{1/r}^{\infty} e^{it\omega} \omega^{\frac{n}{2}-\gamma} J_{\frac{n-2}{2}}(r\omega) d\omega.
\end{equation}
Using \eqref{Bfbd}, the first part is then bounded as
\begin{align}
\nonumber
\bigg|\int_0^{1/r} e^{it\omega} \omega^{\frac{n}{2}-\gamma} J_{\frac{n-2}{2}}(r\omega) d\omega\bigg|
&\lesssim \int_0^{1/r}\omega^{\frac{n}{2}-\gamma} \big|J_{\frac{n-2}{2}}(r\omega)\big| d\omega \\
\nonumber
&\lesssim \int_{0}^{1/r} \omega^{\frac{n}{2}-\gamma} (r\omega)^{\frac{n-2}{2}} d\omega \\
\label{I}
&\lesssim r^{\frac{n-2}{2}}r^{-n+\gamma}
\end{align}
when $\gamma<n$.
A similar argument gives the same bound for the second part under the condition $\gamma>(n+1)/2$, and thus
\begin{equation*}
|K_{\gamma}(x,t)|\lesssim |x|^{-n+\gamma},
\end{equation*}
if $\frac{n+1}2<\gamma<n$ (which is wider than what we want in the first place).

To show $|K_{\gamma}(x,t)|\lesssim |t|^{-1}|x|^{-n+\gamma+1}$ this time,
we start with applying integration by parts to the integral in \eqref{K} as
	\begin{equation} \label{ibp}
	\int_0^{\infty} e^{it\omega} \omega^{\frac{n}{2}- \gamma}J_{\frac{n-2}{2}} (r\omega) d\omega
=- \int_0^{\infty} \frac{e^{it \omega}}{it} \frac{d}{d\omega} \big(\omega^{\frac{n}{2}-\gamma}J_{\frac{n-2}{2}}  (r\omega) \big) d\omega,
	\end{equation}
where the boundary terms vanish when $\frac{n-1}2<\gamma<n-1$; using \eqref{Bfbd},
\begin{equation*}
\bigg| \frac{e^{it\omega}}{it} r^{-\frac{n-2}{2}} \omega^{\frac{n}{2}-\gamma} J_{\frac{n-2}{2}}(r\omega)\bigg|
\lesssim\begin{cases}
|t|^{-1} \omega^{n-1-\gamma} \rightarrow 0 \quad \text{as} \quad \omega \rightarrow 0,\\
|t|^{-1} r^{-\frac{n-1}{2}}\omega^{\frac{n-1}{2}-\gamma} \rightarrow 0 \quad \text{as} \quad \omega \rightarrow \infty.
\end{cases}
\end{equation*}
We then use the following property (see, for example, \cite{G}, p. 425)
	\begin{equation*}
	\frac{d}{dm}(m^{-\nu} J_\nu(m)) = -m^{-\nu} J_{\nu+1}(m) \quad \text{for}\quad\text{Re}\, \nu >-1/2,
	\end{equation*}
to estimate the derivative term in \eqref{ibp} as
\begin{align*}
 \frac{d}{d\omega} \bigg(\omega^{\frac{n}{2}-\gamma} r^{-\frac{n-2}{2}} J_{\frac{n-2}{2}} (r\omega)\bigg)
	&= \frac{d}{d\omega} \bigg( (r\omega)^{-\frac{n-2}{2}} J_{\frac{n-2}{2}} (r\omega)\cdot \omega^{n-\gamma-1} \bigg) \\
	&= Cr^{-\frac{n-2}{2}} \omega^{\frac{n-2}{2}-\gamma}  J_{\frac{n-2}{2}}(r\omega)
- r^{-\frac{n-4}{2}}  \omega^{\frac{n}{2}-\gamma}  J_{\frac{n}{2}}(r\omega).
\end{align*}
Hence we arrive at
\begin{equation}\label{ibp2}
\begin{aligned}
	r^{-\frac{n-2}2}\int_0^{\infty} e^{it\omega} \omega^{\frac{n}{2}- \gamma}J_{\frac{n-2}{2}} (r\omega) d\omega
=&-C r^{-\frac{n-2}{2}} \int_0^{\infty} \frac{e^{it \omega}}{it} \omega^{\frac{n-2}{2}-\gamma}  J_{\frac{n-2}{2}}(r\omega) d\omega\\
&+ r^{-\frac{n-4}{2}} \int_0^{\infty} \frac{e^{it \omega}}{it} \omega^{\frac{n}{2}-\gamma}  J_{\frac{n}{2}}(r\omega) d\omega.
\end{aligned}
\end{equation}
By splitting the integrals in the right side of \eqref{ibp2} into the regions $r\omega<1$ and $r\omega>1$,
and using \eqref{Bfbd} as above, one can see that
\begin{equation}\label{osci2}
\bigg|\int_0^{\infty} \frac{e^{it \omega}}{it} \omega^{\frac{n-2}{2}-\gamma}  J_{\frac{n-2}{2}}(r\omega) d\omega\bigg|
\lesssim |t|^{-1}r^{-n/2+\gamma}
\end{equation}
if $\frac{n-1}2<\gamma<n-1$,
and if $\frac{n+1}2<\gamma<n+1$
\begin{equation*}
\bigg|\int_0^{\infty} \frac{e^{it \omega}}{it} \omega^{\frac{n}{2}-\gamma}  J_{\frac{n}{2}}(r\omega) d\omega\bigg|
\lesssim |t|^{-1}r^{-n/2+\gamma-1}.
\end{equation*}
Combining these estimates, we therefore get for $\frac{n+1}2<\gamma<n-1$
$$\bigg|\int_0^{\infty} e^{it\omega} \omega^{\frac{n}{2}- \gamma}J_{\frac{n-2}{2}} (r\omega) d\omega\bigg|
\lesssim  |t|^{-1}r^{\frac{n-2}2}r^{-n+\gamma+1}$$
which implies immediately
\begin{equation*}
|K_{\gamma}(x,t)|\lesssim |t|^{-1}|x|^{-n+\gamma+1}
\end{equation*}
from \eqref{K}.

\subsection{The case $\frac{n-1}2<\gamma<\frac{n+1}2$ ($1/2<\gamma<1$ if $n=2$)}

In order to obtain this case,
we shall make use of the following asymptotic expansion of Bessel functions (see, for example, \cite{G}, Appendix B)
for the region $r\omega>1$ in the previous argument, and use cancellation in $e^{i(t\pm r)\omega}$.

\begin{lem} \label{asyBess}
	For $m>1$ and $\text{Re}\,\nu >-1/2$,
	\begin{equation} \label{asyBe}
	J_{\nu}(m) = \frac{1}{\sqrt{2\pi m}}\big( e^{i( m - \frac{\pi \nu}{2} -\frac{\pi}{4})}+ e^{-i( m - \frac{\pi \nu}{2} -\frac{\pi}{4})}\big)
+ R_{\nu} (m)
	\end{equation}
	where
\begin{equation}\label{err}
|R_{\nu} (m)| \leq C_{\nu} m^{-3/2}.
\end{equation}
\end{lem}

Inserting \eqref{asyBe} into the second integral in \eqref{C1}, we now see that
\begin{align}\label{cance}
\nonumber\int_{1/r}^{\infty} e^{it\omega} \omega^{\frac{n}{2}-\gamma} J_{\frac{n-2}{2}}(r\omega) d\omega
&=\frac{1}{\sqrt{2\pi}} e^{-i\frac{\pi(n-1)}{4}} r^{-1/2}\int_{1/r}^{\infty} e^{i(t+r)\omega} \omega^{\frac{n-1}{2}-\gamma} d\omega\\
\nonumber&+\frac{1}{\sqrt{2\pi}} e^{i\frac{\pi(n-1)}{4}} r^{-1/2}\int_{1/r}^{\infty} e^{i(t-r)\omega} \omega^{\frac{n-1}{2}-\gamma}d\omega\\
&+ \int_{1/r}^{\infty} e^{it\omega} \omega^{\frac{n}{2}-\gamma}  R_{\frac{n-2}{2}}(r\omega) d\omega.
\end{align}
To bound the first integral in the right side of \eqref{cance}, we first denote $A=t+r$ and may assume $A>0$ without loss of generality.
We also set $\alpha=\gamma-\frac{n-1}{2}$.
Changing variables $A\omega\rightarrow\omega$ and
using the fact that $e^{i\omega}$ is periodic with period $2\pi$, we now see that
\begin{align}\label{thm}
\nonumber
\int_{1/r}^{\infty} e^{i(t+r)\omega} \omega^{\frac{n-1}{2}-\gamma} d\omega
&=A^{\alpha-1} \sum_{k=0}^{\infty} \int_{\frac{A}{r}+2\pi k}^{\frac{A}{r}+2\pi(k+1)} \omega^{-\alpha} e^{i\omega} d\omega \\
\nonumber
&=A^{\alpha-1}\sum_{k=0}^{\infty} \bigg[\int_{\frac{A}{r}+2\pi k}^{\frac{A}{r}+2\pi k+\pi} \omega^{-\alpha} e^{i\omega} d\omega
+\int_{\frac{A}{r}+2\pi k+\pi}^{\frac{A}{r}+2\pi(k+1)} \omega^{-\alpha} e^{i\omega} d\omega \bigg]\\
&= A^{\alpha-1}\int_{A/r}^{\infty} \big[ \omega^{-\alpha} - (\omega + \pi)^{-\alpha}\big]e^{i\omega} d\omega,
\end{align}
and note that for $\alpha>0$
\begin{align*}
\nonumber
\omega^{-\alpha} - (\omega + \pi)^{-\alpha} & = (\omega+\pi)^{-\alpha} \bigg[\Big(1+\frac{\pi}{\omega}\Big)^{\alpha} -1 \bigg] \\
&\lesssim \begin{cases}
(\omega+\pi)^{-\alpha} (\frac{\pi}{\omega})^{\alpha} \quad \text{if} \quad 0 <\omega \leq \pi,\\
(\omega+\pi)^{-\alpha}(\frac{\pi}{\omega}) \quad \text{if} \quad \omega > \pi
\end{cases}
\end{align*}
where we used the mean value theorem when $\omega>\pi$.
The first integral is now bounded as
\begin{align}
\nonumber
\bigg|\int_{1/r}^{\infty} e^{i(t+r)\omega} \omega^{\frac{n-1}{2}-\gamma} d\omega\bigg|
&\lesssim A^{\alpha-1}\bigg(\int_{A/r}^{\pi} (\omega+\pi)^{-\alpha} \omega^{-\alpha} d\omega
+ \int_{\pi}^{\infty} (\omega +\pi)^{-\alpha}\omega^{-1} d\omega \bigg)\\
\nonumber
&\lesssim A^{\alpha-1}\bigg(\int_{A/r}^{\pi} \omega^{-\alpha} d\omega + \int_{\pi}^{\infty} \omega^{-\alpha-1} d\omega\bigg) \\
&\lesssim A^{\alpha-1}
\label{upII_1}
\end{align}
if $0<\alpha<1$ which is equivalent to $\frac{n-1}2<\gamma<\frac{n+1}2$.
A similar argument with $A=t-r$ gives the same bound for the second integral in the right side of \eqref{cance}.
On the other hand, the last integral in \eqref{cance} is bounded by \eqref{err} as
\begin{align}
\nonumber
\bigg|\int_{1/r}^{\infty} e^{it\omega} \omega^{\frac{n}{2}-\gamma}  R_{\frac{n-2}{2}}(r\omega) d\omega\bigg|
& \leq \int_{1/r}^{\infty} \omega^{\frac{n}{2}-\gamma}  (r\omega)^{-3/2} d\omega\\
\nonumber
&\lesssim r^{-3/2}\int_{1/r}^{\infty} \omega^{\frac{n}{2}-\gamma-\frac{3}{2}} d\omega \\
\label{II_3}
&\lesssim r^{-\frac{n+2}{2}+\gamma}
\end{align}
if $\gamma>\frac{n-1}{2}$.
Combining \eqref{I}, \eqref{upII_1} and \eqref{II_3}, we therefore get
\begin{align} \label{qaz}
\nonumber
|K_{\gamma}(x,t)|&\lesssim r^{-\frac{n-2}{2}}\Big(r^{\frac{n-2}{2}}r^{-n+\gamma}+r^{-1/2}|A|^{\alpha-1}+r^{-\frac{n+2}{2}+\gamma}\Big)\\
 \nonumber&\lesssim |x|^{-n+\gamma}+|x|^{-\frac{n-1}{2}} \big| t\pm|x| \big|^{-\frac{n+1}{2}+\gamma}\\
 &\lesssim\begin{cases}
|x|^{-n+\gamma}\quad\text{if}\quad|x|\leq|t|/2,\\
|x|^{-\frac{n-1}{2}} \big| t\pm|x| \big|^{-\frac{n+1}{2}+\gamma}\quad\text{if}\quad|x|\geq|t|/2,
\end{cases}
\end{align}
if $\frac{n-1}2<\gamma<\frac{n+1}2$.

Alternatively as before (see \eqref{cance}), we now insert \eqref{asyBe} into the first integral in the right side of \eqref{ibp2} in the region $r\omega>1$ to see
\begin{align}\label{cance3}
\nonumber\int_{1/r}^{\infty} \frac{e^{it\omega}}{it} \omega^{\frac{n-2}{2}-\gamma} J_{\frac{n-2}{2}}(r\omega) d\omega
&=\frac{1}{it\sqrt{2\pi}} e^{-i\frac{\pi(n-1)}{4}} r^{-1/2}\int_{1/r}^{\infty} e^{i(t+r)\omega} \omega^{\frac{n-3}{2}-\gamma} d\omega\\
\nonumber&+\frac{1}{it\sqrt{2\pi}} e^{i\frac{\pi(n-1)}{4}} r^{-1/2}\int_{1/r}^{\infty} e^{i(t-r)\omega} \omega^{\frac{n-3}{2}-\gamma}d\omega\\
&+ \frac1{it}\int_{1/r}^{\infty} e^{it\omega} \omega^{\frac{n-2}{2}-\gamma}  R_{\frac{n-2}{2}}(r\omega) d\omega.
\end{align}
To estimate the first integral in the right side of \eqref{cance3}, we write it as
\begin{align*}
\int_{1/r}^{\infty} e^{i(t+r)\omega} \omega^{\frac{n-3}{2}-\gamma} d\omega
= A^{\alpha}\int_{A/r}^{\infty} \big[ \omega^{-(\alpha+1)} - (\omega + \pi)^{-(\alpha+1)}\big]e^{i\omega} d\omega
\end{align*}
with $A=t+r$ (see \eqref{thm}).
By the mean value theorem we then bound
\begin{align}\label{edn}
\nonumber\bigg|\int_{1/r}^{\infty} e^{i(t+r)\omega} \omega^{\frac{n-3}{2}-\gamma} d\omega\bigg|
&\lesssim A^{\alpha}\int_{A/r}^{\infty} \big|\omega^{-(\alpha+1)} - (\omega + \pi)^{-(\alpha+1)}\big| d\omega\\
\nonumber&\lesssim A^{\alpha}\int_{A/r}^{\infty} \omega^{-(\alpha+2)} d\omega\\
&\lesssim A^{-1}r^{\alpha+1}.
\end{align}
A similar argument with $A=t-r$ gives the same bound for the second integral in the right side of \eqref{cance3}.
On the other hand, the last integral in \eqref{cance3} is bounded by \eqref{err} as
\begin{align}
\nonumber
\bigg| \int_{1/r}^{\infty} e^{it\omega} \omega^{\frac{n-2}{2}-\gamma}  R_{\frac{n-2}{2}}(r\omega) d\omega \bigg|
\nonumber
&\lesssim  \int_{1/r}^{\infty} \omega^{\frac{n-2}{2}-\gamma} (r\omega)^{-3/2} d\omega \\
\nonumber
&= r^{-\frac{3}{2}} \int_{1/r}^{\infty} \omega^{\frac{n-5}{2}-\gamma} d\omega \\
\label{IV_3}
&\lesssim r^{-n/2+\gamma}
\end{align}
whenever $(n-3)/2<\gamma$.
Combining \eqref{cance3}, \eqref{edn} and \eqref{IV_3},
$$
\bigg|\int_{1/r}^{\infty} \frac{e^{it\omega}}{it} \omega^{\frac{n-2}{2}-\gamma} J_{\frac{n-2}{2}}(r\omega) d\omega\bigg|
\lesssim |t|^{-1}r^{-1/2}|A|^{-1}r^{\alpha+1}+|t|^{-1}r^{-n/2+\gamma}
$$
Hence by \eqref{osci2}
\begin{equation}\label{osci3}
\bigg|\int_0^{\infty} \frac{e^{it \omega}}{it} \omega^{\frac{n-2}{2}-\gamma}  J_{\frac{n-2}{2}}(r\omega) d\omega\bigg|
\lesssim |t|^{-1}r^{-n/2+\gamma}+|t|^{-1}r^{\alpha+1/2}|A|^{-1}
\end{equation}
if $\frac{n-1}2<\gamma<n-1$.
Applying the same argument to the second integral in the right side of \eqref{ibp2} gives
\begin{equation}\label{osci4}
\bigg|\int_0^{\infty} \frac{e^{it \omega}}{it} \omega^{\frac{n}{2}-\gamma}  J_{\frac{n}{2}}(r\omega) d\omega\bigg|
\lesssim |t|^{-1}r^{-n/2+\gamma-1}+|t|^{-1}|A|^{-1}r^{\alpha-1/2}
\end{equation}
if $\frac{n-1}2<\gamma<n+1$.
Consequently, by \eqref{K}, \eqref{ibp2}, \eqref{osci3} and \eqref{osci4},
\begin{align} \label{uio}
\nonumber|K_{\gamma}(x,t)|
&\lesssim |t|^{-1}|x|^{-n+\gamma+1}+|t|^{-1}|x|^{-n+\gamma+2}| t\pm|x||^{-1}\\
&\lesssim\begin{cases}
|t|^{-1}|x|^{-n+\gamma+1}\quad\text{if}\quad|x|\leq|t|/2,\\
|t|^{-1}|x|^{-n+\gamma+2}| t\pm|x||^{-1}\quad\text{if}\quad|x|\geq|t|/2,
\end{cases}
\end{align}
 if $\frac{n-1}2<\gamma<n-1$.
Combining \eqref{qaz} and \eqref{uio}, if $\frac{n-1}2<\gamma<\min\{\frac{n+1}2,n-1\}$, we conclude
\begin{align*}
|K_{\gamma}(x,t)|
&\lesssim\min\big\{|x|^{-n+\gamma},|t|^{-1}|x|^{-n+\gamma+1}\big\}\\
&\lesssim|t|^{-1}|x|^{-n+\gamma+1}
\end{align*}
when $|x|\leq|t|/2$, and when $|x|\geq|t|/2$ we choose \eqref{qaz} this time, which is better than \eqref{uio} to obtain fixed-time estimates;
this is because it decays faster than \eqref{uio} as $|x|\rightarrow\infty$ while it is less singular as $|x|\rightarrow|t|$.
\end{proof}

\section{Time-decay estimates} \label{sec4}
Now we finish the proof of Proposition \ref{fix} by making use of the pointwise estimates just obtained in the previous section.
The assumption \eqref{ftc} is equivalent to assume that
\begin{equation} \label{ftcl}
\frac{2n}{n-\gamma}<r\le \widetilde r <\min\{\frac{4}{n-2\gamma+1},\frac{2n}{n-\gamma-1}\}
\end{equation}
when $\gamma\in(\frac{n}{2},\frac{n+1}{2})$,
and when $\gamma\in(\frac{n+1}{2},n-1)$
\begin{equation} \label{ftcl2}
\frac{2n}{n-\gamma}<r\le \widetilde r <\frac{2n}{n-\gamma-1},
\end{equation}
just by noting $\frac{4}{n-2\gamma+1}=\frac{2n}{n-\gamma-1}$ when $\gamma=\frac{n}2+\frac1{n-1}$.
Under these assumptions we shall prove the time-decay estimates \eqref{x}.

We first choose $\varphi(x) \in C_0^{\infty}(\mathbb{R}^n)$ supported on $\{x\in\mathbb{R}^n:|x|\le1\}$
to calculate
\begin{equation}\label{fixedes}
\|K_{\gamma}(\cdot,t)\|_{W(\frac{\widetilde r}{2},\frac{r}{2})_x}^{r/2}
=\int_{\mathbb{R}^n}  \|K_{\gamma}(\cdot,t) \tau_y \varphi(\cdot)\|_{L_x^{\widetilde r/2}}^{\frac{r}{2}} dy,
\end{equation}
and set
\begin{equation}\label{II}
I_t(|y|):=\int_{|y|-1 \le |x| \le |y|+1} |K_{\gamma}(x,t)|^{{\widetilde r}/2} dx.
\end{equation}
By the size of $\textrm{supp}\,\varphi$, $\|K_{\gamma}(\cdot,t) \tau_y  \varphi(\cdot) \|_{L_x^{\widetilde r/2}}^{\widetilde r/2}$
is then calculated as
\begin{equation*}
\int_{|x-y| \le 1} |K_{\gamma}(x,t)|^{{\widetilde r}/2} dx
\lesssim
\begin{cases}
I_t(|y|)\quad\text{if} \quad |y|\leq1,\\
|y|^{-(n-1)}I_t(|y|)\quad\text{if} \quad |y|\ge1
\end{cases}
\end{equation*}
because the region $\{x\in\mathbb{R}^n:|y|-1 \le |x| \le |y|+1\}$ when $|y| \ge 1$
contains balls with radius $1$ as many as a constant multiple of $|y|^{n-1}$.
Finally, we calculate \eqref{fixedes} as
\begin{align}\label{secint0}
\nonumber\|K_{\gamma}(\cdot,t)\|_{W(\frac{\widetilde r}{2},\frac{r}{2})_x}^{r/2}
&\lesssim\int_{|y| \le 1} I_t(|y|)^{\frac{r}{\widetilde{r}}}dy
+\int_{|y|\ge1}   \big(|y|^{-(n-1)}I_t(|y|)\big)^{\frac{r}{\widetilde{r}}}dy\\
&:=A+B.
\end{align}

\subsection{The case $\gamma\in(\frac{n+1}2,n-1)$}\label{subsec4.1}
In this case we recall from \eqref{kerbd0} that
\begin{equation}\label{upbd0}
| K_{\gamma} (x,t) | \lesssim
\begin{cases}
|t|^{-1}|x|^{-n+1+\gamma} \quad {\textit{if}} \quad |x| \leq |t|/2, \\
|x|^{-n+\gamma}  \quad {\textit{if}} \quad |x| \geq |t|/2.
\end{cases}
\end{equation}

\subsubsection{Estimates for $ I_t(|y|)$}
We now estimate $ I_t(|y|)$ dividing cases into $|y| \le |t|/2-1$, $|t|/2-1\le|y| \le |t|/2+1$ and $|y| \ge |t|/2+1$
by considering both the size of $\textrm{supp}\,\varphi$ and the different behavior of \eqref{upbd0} near $|x|=|t|/2$.

\

\noindent \textit{(a) The case $|y| \le |t|/2-1$ (so $|t|\ge2$).}
In this case the integral \eqref{II} simply boils down to a single integral of the form $\int_a^b\rho^{\alpha-1}d\rho$ ($a\ge0$) with
\begin{equation}\label{alp}
\alpha=\alpha(n,\gamma,\widetilde{r}):=\frac{\widetilde r}{2}(-n+\gamma+1) +n>0,
\end{equation}
which is equivalent to the assumption
\begin{equation}\label{qwd}
\widetilde{r}<2n/(n-\gamma-1)
\end{equation}
in \eqref{ftcl2},
after using the first one only in \eqref{upbd0} and then the polar coordinates.
Indeed,

\begin{itemize}
\item when $|y| \le 1$;
\begin{equation}\label{a0}
I_t(|y|)
\lesssim|t|^{-\frac{\widetilde r}{2}} \int_0^{|y|+1} \rho^{\alpha-1} d\rho
\lesssim |t|^{-\frac{\widetilde r}{2}},
\end{equation}
\item when $|y| \ge 1$;
\begin{equation}\label{a00}
I_t(|y|)
\lesssim
|t|^{-\frac{\widetilde r}{2}} \int_{|y|-1}^{|y|+1} \rho^{\alpha-1} d\rho
\lesssim
|t|^{-\frac{\widetilde r}{2}} (|y|\pm1)^{\alpha-1}
\end{equation}
where we sort $+$ and $-$ from $\pm$ when $\alpha\geq1$ and $0<\alpha<1$, respectively.
\end{itemize}

\

\noindent \textit{(b) The case $|t|/2-1\le|y| \le |t|/2+1$.}
Additionally using the fact that $-\frac{\widetilde{r}}2+\alpha-1<0$
which is already satisfied by the assumption \eqref{ftcl2},
we similarly obtain
\begin{itemize}
\item when $|y| \le 1$;
\begin{align} \label{b0}
\nonumber
I_t(|y|)&\lesssim |t|^{-\frac{\widetilde r}{2}} \int_{0}^{|t|/2} \rho^{\alpha-1} d\rho+\int_{|t|/2}^{|y|+1} \rho^{-\frac{\widetilde{r}}2+\alpha-1}d\rho\\
&\lesssim|t|^{-\frac{\widetilde r}{2}+\alpha}+|t|^{-\frac{\widetilde{r}}2+\alpha-1},
\end{align}
\item when $|y| \ge 1$;
\begin{align} \label{b00}
\nonumber
I_t(|y|)&\lesssim |t|^{-\frac{\widetilde r}{2}} \int_{|y|-1}^{|t|/2} \rho^{\alpha-1} d\rho+\int_{|t|/2}^{|y|+1} \rho^{-\frac{\widetilde{r}}2+\alpha-1}d\rho\\
&\lesssim
\begin{cases}
|t|^{-\frac{\widetilde r}{2}+\alpha-1}\quad\text{if}\quad\alpha\ge1,\\
|t|^{-\frac{\widetilde r}{2}}(|y|-1)^{\alpha-1}+|t|^{-\frac{\widetilde r}{2}+\alpha-1}\quad\text{if}\quad0<\alpha<1.
\end{cases}
\end{align}
\end{itemize}

\

\noindent \textit{(c) The case $|y| \ge |t|/2+1$.}

\begin{equation} \label{c0}
I_t(|y|)\lesssim \int_{|y|-1}^{|y|+1} \rho^{-\frac{\widetilde{r}}2+\alpha-1}d\rho
\lesssim(|y|-1)^{-\frac{\widetilde{r}}2+\alpha-1}.
\end{equation}

\subsubsection{Putting things together}\label{subsubsec4.1.2}
We now estimate the first part $A$ in \eqref{secint0}
combining \eqref{a0} and \eqref{b0}, as follows:
\begin{itemize}
\item when $|t|\ge4$;
\begin{equation*}
A
\lesssim\int_{|y| \le 1} \eqref{a0}^{\frac{r}{\widetilde{r}}}dy
\lesssim|t|^{-\frac{r}{2}},
\end{equation*}
\item when $2\le|t|\le4$;
\begin{align*}
A
&\lesssim\int_{|y| \le \frac{|t|}2-1} \eqref{a0}^{\frac{r}{\widetilde{r}}}dy
+\int_{\frac{|t|}2-1\le|y| \le 1} \eqref{b0}^{\frac{r}{\widetilde{r}}}dy\\
&\lesssim|t|^{-\frac{r}{2}}+|t|^{-\frac{r}2+\frac{r}{\widetilde{r}}(\alpha-1)},
\end{align*}
\item when $|t|\le2$;
\begin{equation*}
A
\lesssim\int_{|y| \le 1} \eqref{b0}^{\frac{r}{\widetilde{r}}}dy
\lesssim|t|^{-\frac{r}2+\frac{r}{\widetilde{r}}(\alpha-1)}.
\end{equation*}
\end{itemize}

Next we estimate the second part $B$ in \eqref{secint0}
using \eqref{a00}, \eqref{b00} and \eqref{c0}. With the notation $d\mu=|y|^{-\frac{r}{\widetilde{r}}(n-1)}dy$, one can see that
\begin{itemize}
\item when $|t|\ge4$;
\begin{align*}
B
&\lesssim\int_{1\le|y| \le \frac{|t|}2-1} \eqref{a00}^{\frac{r}{\widetilde{r}}}d\mu
+\int_{\frac{|t|}2-1\le|y| \le \frac{|t|}2+1} \eqref{b00}^{\frac{r}{\widetilde{r}}}d\mu
+\int_{|y| \ge \frac{|t|}2+1} \eqref{c0}^{\frac{r}{\widetilde{r}}}d\mu\\
&\lesssim|t|^{-\frac{r}2+\frac{r}{\widetilde{r}}(\alpha-n)+n},
\end{align*}
\item when $|t|\le4$;
\begin{align*}
B
&\lesssim\int_{1\le|y| \le \frac{|t|}2+1} \eqref{b00}^{\frac{r}{\widetilde{r}}}d\mu
+\int_{|y| \ge \frac{|t|}2+1} \eqref{c0}^{\frac{r}{\widetilde{r}}}d\mu\\
&\lesssim|t|^{-\frac{r}2+\frac{r}{\widetilde{r}}(\alpha-1)}.
\end{align*}
\end{itemize}
It is the most delicate case to bound the integral on the region $|y|\ge|t|/2+1$.
It is only different from the other cases which can be done by a simple computation without additional assumptions,
and needs more explanation;
let $\kappa=\kappa(n,r,\widetilde{r}):=-\frac{r}{\widetilde r}(n-1)+n-1$.
Changing variables $\rho\rightarrow\rho+1$ and then applying the binomial theorem to
$(\rho+1)^{n}$, we estimate it when $|t|\geq4$ as
\begin{align}\label{4}
\int_{|t|/2+1}^\infty (\rho-1)^{\frac{r}{\widetilde{r}}(-\frac{\widetilde{r}}{2}+\alpha-1)}\rho^{\kappa} d\rho
&= \int_{|t|/2}^\infty \rho^{\frac{r}{\widetilde{r}}(-\frac{\widetilde{r}}{2}+\alpha-1)}(\rho+1)^{\kappa} d\rho \\
\nonumber&\lesssim \int_{|t|/2}^{\infty} \rho^{\frac{r}{\widetilde{r}}(\alpha-n)-\frac{r}2-1}(\rho+1)^{n} d\rho \\
\nonumber&= \sum_{m=0}^{n}C_{n,m}\int_{|t|/2}^{\infty}  \rho^{\frac{r}{\widetilde{r}}(\alpha-n)-\frac{r}2-1+m}d\rho \\
\nonumber&\sim |t|^{\frac{r}{\widetilde{r}}(\alpha-n)-\frac{r}2}\sum_{m=0}^{n}C_{n,m} |t|^{m}  \\
\nonumber&\lesssim |t|^{\frac{r}{\widetilde{r}}(\alpha-n)-\frac{r}2+n}
\end{align}
provided $\frac{r}{\widetilde{r}}(\alpha-n)-\frac{r}2+m=\frac{r}{2}(-n+\gamma)+m<0$ for all $0 \le m \le n$
which follows from the assumption
\begin{equation}\label{condd}
r>2n/(n-\gamma)
\end{equation}
in \eqref{ftcl2}.
Here, $C_{n,m}$ denotes the binomial coefficients.
When $|t|\leq4$ we estimate \eqref{4} in a different way as
$$
 \int_{|t|/2}^\infty \rho^{\frac{r}{\widetilde{r}}(-\frac{\widetilde{r}}{2}+\alpha-1)}(\rho+1)^{\kappa} d\rho
=\int_{|t|/2}^{2}\rho^{\frac{r}{\widetilde{r}}(-\frac{\widetilde{r}}{2}+\alpha-1)}(\rho+1)^{\kappa} d\rho
+\int_{2}^\infty \rho^{\frac{r}{\widetilde{r}}(-\frac{\widetilde{r}}{2}+\alpha-1)}(\rho+1)^{\kappa} d\rho.
$$
The second integral in the right side is bounded as above;
\begin{align*}
\int_{2}^\infty \rho^{\frac{r}{\widetilde{r}}(-\frac{\widetilde{r}}{2}+\alpha-1)}(\rho+1)^{\kappa} d\rho
&\lesssim  \sum_{m=0}^{n-1}C_{n,m}\int_{2}^{\infty}  \rho^{\frac{r}{\widetilde{r}}(\alpha-n)-\frac{r}2+m}d\rho\\
&\lesssim  \sum_{m=0}^{n-1}C_{n,m} \lesssim 1.
\end{align*}
On the other hand, the first one is bounded as
$$
\int_{|t|/2}^{2} \rho^{\frac{r}{\widetilde{r}}(-\frac{\widetilde{r}}{2}+\alpha-1)}(\rho+1)^{\kappa}d\rho
\lesssim \int_{|t|/2}^{2}\rho^{\frac{r}{\widetilde{r}}(-\frac{\widetilde{r}}{2}+\alpha-1)}d\rho
\lesssim |t|^{\frac{r}{\widetilde{r}}(-\frac{\widetilde{r}}{2}+\alpha-1)}
$$
provided
\begin{equation}\label{conds}
\kappa\geq0 \quad(\text{i.e.},\, r\leq\widetilde{r})
\end{equation}
and
\begin{equation*}
\frac{r}{\widetilde{r}}(-\frac{\widetilde{r}}{2}+\alpha-1)=\frac{r}{2}(-n+\gamma)+\frac{r}{\widetilde r}(n-1)<0
\end{equation*}
which are already satisfied by the assumption \eqref{ftcl2}.

Combining \eqref{secint0} with these bounds for $A,B$, we conclude that
\begin{equation*}
\|K_{\gamma}(\cdot,t)\|_{W(\frac{\widetilde r}{2},\frac{r}{2})_x}^{r/2}
\lesssim
\begin{cases}
|t|^{-\frac{r}2+\frac{r}{\widetilde{r}}(\alpha-n)+n}\quad\text{if}\quad|t|\geq1,\\
|t|^{\frac{r}{\widetilde{r}}(-\frac{\widetilde{r}}{2}+\alpha-1)}\quad\text{if}\quad|t|\leq1,
\end{cases}
\end{equation*}
under the same assumptions \eqref{qwd}, \eqref{condd} and \eqref{conds} as in \eqref{ftcl2}.
The proof is now complete by substituting \eqref{alp} for $\alpha$ here.

\subsection{The case $\gamma\in(\frac{n}2,\frac{n+1}2)$}
This case is handled in the same way as well. So we shall omit the details.
Recall from \eqref{kerbd} that
\begin{equation}\label{upbd}
| K_{\gamma} (x,t) | \lesssim
\begin{cases}
|t|^{-1}|x|^{-n+1+\gamma} \quad {\textit{if}} \quad |x| \leq |t|/2, \\
|x|^{-\frac{n-1}{2}} \big||x|-|t|\big|^{-\frac{n}{2}-\frac{1}{2}+\gamma} \quad {\textit{if}} \quad |x| \geq |t|/2.
\end{cases}
\end{equation}
This time, the region of $|y|$ is split into more than those in the previous case
because \eqref{upbd} behaves differently near $|x|=|t|$ as well as $|x|=|t|/2$:
$$
|y| \le \frac{|t|}{2}-1, \quad \frac{|t|}{2}-1 \le |y| \le \frac{|t|}{2}+1,\quad|y| \ge \frac{|t|}2+1,
$$
$$
\frac{|t|}{2}+1 \le |y| \le |t|-1,\quad |t|-1 \le |y| \le |t|+1, \quad |y| \ge |t|+1.
$$
To calculate $A$ and $B$ in \eqref{secint0} as before, we need to estimate \eqref{II} on these regions.
It ultimately boils down to estimations for two integrals of the forms
$\int_a^b\rho^{\alpha-1}d\rho$ ($a\ge0$)
and $\int_c^d\rho^\lambda\big|\rho-|t|\big|^{\beta}d\rho$ ($c>0$)
with the same $\alpha>0$ given in \eqref{alp},
$$\lambda=\lambda(n,\widetilde{r}):=-\frac{\widetilde r}{2} \cdot \frac{n-1}{2} +n-1<0\quad\text{and}\quad
\beta=\beta(n,\gamma,\widetilde{r}):=\frac{\widetilde r}{2}(-\frac{n}{2}-\frac{1}{2}+\gamma)<0,$$
after using \eqref{upbd} and the polar coordinates.
Here the conditions $\alpha>0$ and $\lambda,\beta<0$ are satisfied by the assumption \eqref{ftcl}.
The integral $\int_a^b\rho^{\alpha-1}d\rho$ is handled in the same way as before,
and therefore we only need to show how to handle the other integral $\int_c^d\rho^\lambda\big|\rho-|t|\big|^{\beta}d\rho$;
note first that
$$\int_c^d\rho^\lambda\big|\rho-|t|\big|^{\beta}d\rho\lesssim c^\lambda\int_{c-|t|}^{d-|t|}|\rho|^{\beta}d\rho$$
since $\lambda<0$ and $c>0$, and then estimate this by dividing cases as follows:
\begin{itemize}
\item when $0<c-|t|$;
$$\int_{c-|t|}^{d-|t|}|\rho|^{\beta}d\rho\lesssim(c-|t|)^\beta(d-c),$$
\item when $c-|t|<0<d-|t|$;
\begin{align*}
\int_{c-|t|}^{d-|t|}|\rho|^{\beta}d\rho
&\lesssim\int_{c-|t|}^{0}(-\rho)^{\beta}d\rho+\int_{0}^{d-|t|}\rho^{\beta}d\rho\\
&\lesssim(|t|-c)^{\beta+1}+(d-|t|)^{\beta+1}
\end{align*}
provided $\beta+1>0$ equivalent to the remaining assumption $\widetilde{r}<4/(n-2\gamma+1)$ in \eqref{ftcl},
\item when $d-|t|<0$;
$$\int_{c-|t|}^{d-|t|}|\rho|^{\beta}d\rho=\int_{c-|t|}^{d-|t|}(-\rho)^{\beta}d\rho\lesssim(|t|-d)^\beta(d-c).$$
\end{itemize}

\section{Retarded estimates}\label{sec5}
Now we obtain the retarded estimates \eqref{reta} in Theorem \ref{thm01} based on the time-decay estimates in Proposition \ref{fix}.
By \eqref{ker} the desired estimates are rephrased as
\begin{equation} \label{reta1}
	\bigg\| \int_0^t (K_{\gamma}(\cdot,t-s) * F(\cdot,s))(x) ds \bigg\|_{W(\widetilde q ,q)_t W(\widetilde r,r)_x} \lesssim \|F\|_{W(\widetilde q_1' ,q_1')_t W(\widetilde r_1',r_1')_x}.
\end{equation}
By Minkowski's inequality and the convolution relation \eqref{y-ineq}, it follows that
\begin{align} \label{bfhls3}
	\nonumber \bigg\| \int_0^t (K_\gamma (\cdot,t-s)* &F(\cdot,s))(x) ds \bigg\|_{W(\widetilde q ,q)_t W(\widetilde r ,r)_x} \\
	&\le \bigg\| \int_0^t \|K_{\gamma}(\cdot,t-s)\|_{W(\widetilde r_0 ,r_0)_x} \|F(\cdot,s)\|_{W(\widetilde r_1',r_1')_x} ds\bigg\|_{W(\widetilde q ,q)_t}
\end{align}
where $1/{\widetilde r_0}:=1/{\widetilde r}+1/{\widetilde r_1}$ and $1/{r_0}:=1/r+1/r_1$.
Using \eqref{hls} with $\alpha=1/q+1/q_1$ and $p=q_1'$ and the usual Young's inequality, the convolution relation gives
\begin{equation*}
	W(L^{\widetilde q_0}, L^{\frac1\alpha, \infty})_t * W(\widetilde q_1 ', q_1')_t \subset W(\widetilde q, q )_t
\end{equation*}
if
\begin{equation*}
	0\le\frac{1}{\widetilde q_0}:=\frac{1}{\widetilde q}+\frac{1}{\widetilde q_1}\leq1, \quad  0<\frac{1}{q}+\frac{1}{q_1}<1 \quad (q\neq\infty).
\end{equation*}
Hence we get
\begin{align} \label{bfhls4}
	\nonumber
	\bigg\| \int_0^t \|K_{\gamma}(\cdot,t-s)&\|_{W(\widetilde r_0 ,r_0)_x}\|F(\cdot,s)\|_{W(\widetilde r_1',r_1')_x} ds\bigg\|_{W(\widetilde q ,q)_t} \\
	&\qquad \lesssim
	\| K_{\gamma} \|_{{W(L^{\widetilde q_0},  L^{\frac1\alpha,\infty})_t}{W(\widetilde r_0 ,r_0)_x}}
	\|F\|_{W(\widetilde q_1' ,q_1')_t W(\widetilde r_1',r_1')_x}.
\end{align}
Combining \eqref{bfhls3} and \eqref{bfhls4}, we now obtain the desired estimate \eqref{reta1} if
\begin{equation}\label{t1}
	\| K_{\gamma} \|_{{W(L^{\widetilde q_0},  L^{\frac1\alpha,\infty})_t}{W(\widetilde r_0 ,r_0)_x}}< \infty
\end{equation}
with$\frac{1}{\widetilde q_0}=\frac{1}{\widetilde q}+\frac{1}{\widetilde q_1}$, $\alpha =\frac{1}{q}+\frac{1}{q_1}$,
$\frac{1}{\widetilde r_0} = \frac{1}{\widetilde r} + \frac{1}{\widetilde r_1}$ and $\frac{1}{r_0} = \frac{1}{r} + \frac{1}{r_1}$
 for $(\widetilde q, \widetilde r)$, $(q,r)$, $(\widetilde q_1, \widetilde r_1)$ and $(q_1,r_1)$ given as in Theorem \ref{thm01}.

One can show \eqref{t1} obviously in the same way as \eqref{t}.
So we omit the details;
set $h(t)= \|K_{\gamma}(\cdot,t)\|_{W(\widetilde r_0,r_0)_x}$
and choose $\varphi(t) \in C_0^{\infty}(\mathbb{R})$ supported on $\{t\in\mathbb{R}:|t|\le1\}$.
Using the time-decay estimates \eqref{x} with $\widetilde r = 2\widetilde r_0$ and $r=2 r_0$,
\begin{equation} \label{local_t1}
	\|h\tau_k\varphi\|_{L_t^{\widetilde q_0}} \lesssim
	\begin{cases}
		1 \quad\textit{if}\quad |k| \leq 2,\\
		(|k|-1)^{-(n-\gamma - \frac{n}{r_0})} \quad\textit{if}\quad |k| \geq 2
	\end{cases}
\end{equation}
as before (see \eqref{local_t}) under the conditions in Theorem \ref{thm01}.
By \eqref{local_t1}, $\|h\tau_k\varphi\|_{L_t^{\widetilde q_0}}$  belongs to $L^{1/\alpha, \infty}_k$
since $\alpha = n-\gamma-\frac{n}{r_0}$ from the second condition in \eqref{ic}.
This finally implies
\begin{equation*}
	\| h \|_{W(L^{\widetilde q_0}, L^{\frac1\alpha,\infty})_t} < \infty
\end{equation*}
as desired.

\section{Local well-posedness}\label{sec6}
This final section is devoted to proving Theorem \ref{thmthm}.
By Duhamel's principle, we first write the solution to \eqref{eq} as
\begin{equation}\label{zxc}
	\Phi(u)=\cos (t\sqrt{-\Delta})f + \frac{\sin (t\sqrt{-\Delta})}{\sqrt{-\Delta}} g+ \int_0^t \frac{\sin((t-s)\sqrt{-\Delta})}{\sqrt{-\Delta}} F_k(u)(\cdot,s)ds.
\end{equation}
Then we will make use of the homogeneous and retarded estimates to each of the terms in \eqref{zxc} to show that
$\Phi$ defines a contraction map on
\begin{equation*}
	X(T,M) = \big\{u \in W(\widetilde q,q)_t(I;W(\widetilde r , r)_x(\mathbb{R}^3)): \|u\|_{W(\widetilde q,q)_t(I;W(\widetilde r , r)_x(\mathbb{R}^3))} \leq M\big\}
\end{equation*}
for appropriate values of $T, M>0$.
Here, $I:=[0,T]$ and $(\widetilde q, \widetilde r), (q, r)$ are given as in Theorem \ref{thmthm}.
To begin with, we need the following homogeneous estimates with low Sobolev norms and
the inhomogeneous estimates exactly suit to the Duhamel term in \eqref{zxc}:

\begin{cor} \label{corr}
Let $n=3$ and $0<\sigma <1$. Assume that
\begin{equation} \label{h1}
	0<\frac{1}{q} < \frac{1}{\widetilde q} \le \frac{\sigma}{2} \quad \text{and} \quad
	\frac{1-\sigma}{2} < \frac{1}{\widetilde r} \leq \frac{1}{r} < \frac{3-2\sigma}{6}.
\end{equation}
Then we have
\begin{equation} \label{T1}
	\|e^{it\sqrt{-\Delta}}f\|_{W(\widetilde q , q)_t W(\widetilde r,r)_x} \lesssim \|f\|_{\dot H^{\sigma}}
\end{equation}
if
\begin{equation} \label{h2}
	\frac{1}{\widetilde q} + \frac{2}{\widetilde r} > 1-\frac{\sigma}{2} \quad \text{and} \quad \frac{1}{q} + \frac{3}{r} = \frac{3}{2}-\sigma.
\end{equation}
\end{cor}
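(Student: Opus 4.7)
The key observation is that the condition $1/\widetilde q + 2/\widetilde r > 1 - \sigma/2$ in \eqref{h2} is strictly weaker than the corresponding condition $1/\widetilde q + 2/\widetilde r > 3/2 - \sigma$ from Theorem \ref{thm1} (applied with $n=3$), since $\sigma < 1$. Thus one cannot simply quote Theorem \ref{thm1}. The plan is to interpolate, as indicated in the second remark after Theorem \ref{thm1}: combine Theorem \ref{thm1} at a nearby exponent $\sigma_0 \in (3/4, 1)$ with the trivial energy estimate $\|e^{it\sqrt{-\Delta}} f\|_{L^\infty_t L^2_x} \lesssim \|f\|_{L^2}$, which is the wave-admissible case of \eqref{classi} at $\sigma_1 = 0$ and $(q_1, r_1) = (\infty, 2)$. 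Using $L^p = W(p,p)$, this classical endpoint reads $\|e^{it\sqrt{-\Delta}} f\|_{W(\infty, \infty)_t W(2, 2)_x} \lesssim \|f\|_{L^2}$, so that \eqref{inter} is applicable in both the time and space slots.

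\textbf{Parameter choice.} Given $(q, \widetilde q, r, \widetilde r, \sigma)$ satisfying \eqref{h1}-\eqref{h2}, fix $\theta \in \bigl(\sigma,\min\{1, 4\sigma/3\}\bigr)$ (a nonempty interval for every $\sigma \in (0,1)$) and set
\begin{equation*}
\sigma_0 := \sigma/\theta \in (3/4, 1),\quad \widetilde q_0 := \theta \widetilde q,\quad q_0 := \theta q,
\end{equation*}
together with $\widetilde r_0, r_0$ defined by the interpolation identities $1/\widetilde r = \theta/\widetilde r_0 + (1-\theta)/2$ and $1/r = \theta/r_0 + (1-\theta)/2$. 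By construction $1/\widetilde q = \theta/\widetilde q_0$, $1/q = \theta/q_0$, and $\sigma = \theta \sigma_0 + (1-\theta)\cdot 0$.

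\textbf{Verification.} The core step is to check that $(\sigma_0, \widetilde q_0, q_0, \widetilde r_0, r_0)$ meets \eqref{ass} and \eqref{c1} of Theorem \ref{thm1} once $\theta$ is taken close enough to $\sigma$. The scaling identity $1/q_0 + 3/r_0 = 3/2 - \sigma_0$ follows from the scaling in \eqref{h2} by linearity; $2 \le \widetilde q_0 < q_0 < \infty$ follows from $\widetilde q \ge 2/\sigma$ (implied by \eqref{h1}), $\theta > \sigma$, and $\widetilde q < q < \infty$; and $6/(3-2\sigma_0) < r_0 \le \widetilde r_0$ is a direct rewriting of $1/r < (3-2\sigma)/6$ together with $1/\widetilde r \le 1/r$. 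The remaining strict inequalities $\widetilde r_0 < 1/(1-\sigma_0)$ and $1/\widetilde q_0 + 2/\widetilde r_0 > 3/2 - \sigma_0$ simplify, after substitution, to
\begin{equation*}
1/\widetilde r > (1 + \theta - 2\sigma)/2 \quad\text{and}\quad 1/\widetilde q + 2/\widetilde r > 1 + \theta/2 - \sigma,
\end{equation*}
each of which converges to the Corollary's strict hypothesis as $\theta \to \sigma^+$; hence both hold provided $\theta$ is chosen close enough to $\sigma$.

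\textbf{Conclusion.} With this parameter matching, Theorem \ref{thm1} delivers $\|e^{it\sqrt{-\Delta}}f\|_{W(\widetilde q_0, q_0)_t W(\widetilde r_0, r_0)_x} \lesssim \|f\|_{\dot H^{\sigma_0}}$, while the energy estimate gives the classical endpoint; complex interpolation at parameter $\theta$ via \eqref{inter} in both time and space (admissible since $q_0,r_0<\infty$), combined with $(\dot H^{\sigma_0}, L^2)_{[\theta]} = \dot H^\sigma$, yields \eqref{T1}. The main obstacle is purely algebraic book-keeping: every strict inequality in \eqref{ass}-\eqref{c1} must, after the substitutions above, reduce either to a condition already in \eqref{h1}-\eqref{h2} or to a perturbation of one that can be absorbed by tuning $\theta$ sufficiently close to $\sigma$; the two nontrivial ones, involving $\widetilde r_0$ and $\widetilde q_0 + \widetilde r_0$, are exactly the $\theta \to \sigma^+$ limits of \eqref{h1} and \eqref{h2}.
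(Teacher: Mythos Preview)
Your proof is correct and follows essentially the same route as the paper: complex interpolation between the energy estimate $\|e^{it\sqrt{-\Delta}}f\|_{W(\infty,\infty)_tW(2,2)_x}\lesssim\|f\|_{L^2}$ and Theorem~\ref{thm1} at a regularity $\sigma_0\in(3/4,1)$ close to~$1$. Your parameterization (fixing $\theta$ slightly above $\sigma$ and setting $\sigma_0=\sigma/\theta$) is in fact cleaner than the paper's, which writes ``$\sigma_1$ arbitrarily near $1$'' and ``$\sigma=\theta$'' somewhat loosely; your explicit verification that the two nontrivial strict inequalities reduce to $1/\widetilde r>(1+\theta-2\sigma)/2$ and $1/\widetilde q+2/\widetilde r>1+\theta/2-\sigma$, and hence hold for $\theta$ close to $\sigma^+$, makes the limiting argument precise.
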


\begin{cor}\label{cor}
	Let $n=3$.
	Assume that
	\begin{equation} \label{inhoc}
		0<\frac{1}{q}+\frac{1}{q_1} < \frac{1}{\widetilde q} + \frac{1}{\widetilde q_1} \leq \frac{1}{2} \quad(q\neq\infty)
\quad \text{and} \quad \frac{1}{2} < \frac{1}{\widetilde r} + \frac{1}{\widetilde r_1} \leq \frac{1}{r} + \frac{1}{r_1} < \frac{2}{3}.
	\end{equation}
	Then we have
	\begin{equation} \label{inho}
		\bigg\|\int_0^t e^{i(t-s)\sqrt{-\Delta}}|\nabla|^{-1}F(\cdot,s)ds\bigg\|_{W(\widetilde q,q)_t W(\widetilde r , r)_x}
		\lesssim \|F\|_{W(\widetilde q_1',q_1')_t W(\widetilde r_1' , r_1')_x}
	\end{equation}
	if
	\begin{equation} \label{inhoc1}
		\frac{1}{\widetilde q} + \frac{1}{\widetilde q_1} + \frac{2}{\widetilde r} + \frac{2}{\widetilde r_1} > \frac{3}{2} \quad \text{and} \quad \frac{1}{q} +\frac{1}{q_1} + \frac{3}{r} + \frac{3}{r_1} = 2.
	\end{equation}
\end{cor}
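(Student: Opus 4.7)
The plan is to imitate the proof of Theorem \ref{thm01} in Section \ref{sec5}. After writing the right-hand side of \eqref{inho} as a convolution of $F$ with the kernel $K_1$ of $|\nabla|^{-1}e^{it\sqrt{-\Delta}}$ and applying Minkowski's inequality, the convolution relation \eqref{y-ineq}, and the fractional Hardy-Littlewood-Sobolev inequality \eqref{hls} exactly as in the passage from \eqref{bfhls3} to \eqref{bfhls4}, the estimate \eqref{inho} reduces to showing
\[
\|K_1\|_{W(L^{\widetilde q_0}, L^{1/\alpha, \infty})_t W(\widetilde r_0, r_0)_x} < \infty,
\]
where $1/\widetilde q_0 = 1/\widetilde q + 1/\widetilde q_1$, $1/\widetilde r_0 = 1/\widetilde r + 1/\widetilde r_1$, $1/r_0 = 1/r + 1/r_1$, and $\alpha = 1/q + 1/q_1$. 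The second (scaling) identity in \eqref{inhoc1} is precisely $\alpha = 2 - 3/r_0$.

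The main new input is a pair of time-decay bounds for $K_1$ in $n=3$. I aim to show that
\[
\|K_1(\cdot,t)\|_{W(\widetilde r_0, r_0)_x} \lesssim
\begin{cases}
|t|^{-(3/2 - 2/\widetilde r_0)} & \text{if } |t| \le 1, \\
|t|^{-(2 - 3/r_0)} & \text{if } |t| \ge 1.
\end{cases}
\]
Granted these, the same argument that gave \eqref{local_t1} turns the first inequality in \eqref{inhoc1}, namely $1/\widetilde q_0 + 2/\widetilde r_0 > 3/2$, into the required local $L_t^{\widetilde q_0}$-integrability near $t=0$, while the identity $\alpha = 2 - 3/r_0$ lands $\|K_1(\cdot,t)\|_{W(\widetilde r_0, r_0)_x}$ in the outer $L^{1/\alpha,\infty}_k$ norm. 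These exponents are sharper by a factor of $|t|^{1/2}$ than what a formal extension of Proposition \ref{fix} to $(n,\gamma)=(3,1)$ would predict, and this gain is precisely what accounts for the weaker constraint $1/\widetilde r_0 > 1/2$ in \eqref{inhoc} in place of the would-be $> 1$ inherited from Theorem \ref{thm01}.

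For the decay estimates themselves I would use the closed form $J_{1/2}(z) = \sqrt{2/(\pi z)}\,\sin z$, which collapses the polar representation \eqref{K} to the elementary oscillatory integral
\[
K_1(x,t) = \frac{C}{|x|}\int_0^\infty e^{it\omega}\sin(|x|\omega)\,d\omega.
\]
The regime $|x|\omega \le 1$ is handled as in \eqref{I}, while for $|x|\omega \ge 1$ one exploits the exact cancellation between the two $e^{i(t\pm|x|)\omega}$ phases via a shift-and-subtract argument analogous to \eqref{thm}; this reflects the sharp Huygens principle in three dimensions, whereby the resulting distributional kernel is essentially concentrated on the light cone $|x|=t$. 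The Wiener amalgam norm is then assembled through the polar splitting \eqref{secint0} as in Section \ref{sec4}, with the singular profile now supported near $|x|=t$ rather than spread throughout the domain, which is where the improved decay arises.

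The main obstacle is the borderline nature of $(n,\gamma)=(3,1)$: the pointwise bounds of Proposition \ref{pw} break down and $K_1$ must be treated distributionally, with the Wiener amalgam pairing justified through a regularization argument. If the direct analysis turns out to be too delicate, a robust fallback is complex interpolation via \eqref{inter} between Theorem \ref{thm01} at some $\gamma_1 \in (3/2, 2)$ and a trivial $L^\infty_t L^2_x$ energy estimate at $\gamma = 0$; one then checks that the resulting admissible set of exponents covers the range prescribed by \eqref{inhoc} and \eqref{inhoc1}.
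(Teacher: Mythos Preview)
Your primary approach---estimating $\|K_1(\cdot,t)\|_{W(\widetilde r_0, r_0)_x}$ directly---cannot work as written. At $(n,\gamma)=(3,1)$ the kernel is not a locally integrable function: your own computation gives $K_1(x,t) = \frac{c}{|x|}\int_0^\infty e^{it\omega}\sin(|x|\omega)\,d\omega$, and this integral evaluates distributionally to a combination of a surface measure on the light cone $\{|x|=|t|\}$ and a principal-value term $\text{p.v.}\,\frac{1}{|x|(|t|-|x|)}$. Neither piece lies in $L^p_{\text{loc}}$ for any $p\ge 1$, so the amalgam norm $\|K_1(\cdot,t)\|_{W(\widetilde r_0, r_0)_x}$ is infinite for every admissible $\widetilde r_0$, and the reduction to a bound of the form \eqref{t1} collapses. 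The ``regularization'' hedge does not rescue the argument: the strategy of Section~\ref{sec5} passes through the convolution relation \eqref{y-ineq} on the amalgam scale, and that step genuinely requires the kernel to sit in some $W(\widetilde r_0,r_0)$---which here it does not. The sharp Huygens phenomenon you invoke is precisely what destroys the fixed-time kernel bound rather than what supplies the claimed $|t|^{1/2}$ gain.

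Your fallback is the right idea and is what the paper actually does. The paper interpolates (via \eqref{inter}, together with Lemma~\ref{inS} to handle the varying $|\nabla|^{-\gamma}$ weights) between Theorem~\ref{thm01} at $\gamma$ in its admissible range and the classical retarded Strichartz estimate \eqref{in1} taken at $(q,r,\sigma)$ arbitrarily close to $(\infty,2,0)$; the latter is essentially your ``trivial energy estimate at $\gamma=0$'', but stated in $L^q_tL^r_x$ generality so the interpolation has room to move. One detail you do not address: a single interpolation cannot simultaneously decouple $\widetilde r$ from $r$ on the output side and $\widetilde r_1$ from $r_1$ on the input side. The paper therefore proceeds in two passes---first interpolating the case $\widetilde r = r$ of \eqref{reta} against \eqref{in1} to obtain \eqref{in2}, then the case $\widetilde r_1 = r_1$ to obtain \eqref{in3}---and finally interpolates \eqref{in2} against \eqref{in3} to reach the full range \eqref{inhoc}, \eqref{inhoc1}.
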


\begin{rem} \label{r}
If $(\widetilde{q},q,\widetilde{r},r)$ and $(\widetilde{q}_1,q_1,\widetilde{r}_1,r_1)$ are given as in Corollary \ref{corr} with
$\sigma$ and $1-\sigma$, respectively, then they satisfy the conditions \eqref{inhoc} and \eqref{inhoc1}.
This fact is usefully used to prove \eqref{data} in Theorem \ref{thmthm}.
\end{rem}

Assuming for the moment these corollaries which will be derived in the rest of this section from our main estimates,
we first show that $\Phi(u)\in X$ for $u\in X$.
For this, we apply the homogeneous estimates \eqref{T1} to the homogeneous terms in \eqref{zxc} to get
\begin{equation*}
	\bigg\|\cos (t\sqrt{-\Delta})f + \frac{\sin (t\sqrt{-\Delta})}{\sqrt{-\Delta}} g\bigg\|_{W(\widetilde q,q)_t(I;W(\widetilde r , r)_x)}
	\leq C \|f\|_{\dot H^{\sigma}} + C\|g\|_{\dot H^{\sigma-1}}
\end{equation*}
under the conditions in Corollary \ref{corr}.
On the other hand, we apply \eqref{inho} to the Duhamel term in \eqref{zxc} to see
\begin{equation*}
	\bigg\|\int_0^t \frac{\sin((t-s)\sqrt{-\Delta})}{\sqrt{-\Delta}} F_k(u)(\cdot,s)ds\bigg\|_{W(\widetilde q,q)_t(I;W(\widetilde r , r)_x)}
	\leq C\|F_k(u)\|_{W(\widetilde q_1',q_1')_t(I;W(\widetilde r_1' , r_1')_x)}
\end{equation*}
under the conditions in Corollary \ref{cor}.
By using the inclusion relation \eqref{inclusion} and  H\"older's inequality together with the assumption \eqref{cF},
the right side here is bounded as
\begin{align} \label{wd2}
	\nonumber
	C\|F_k(u)\|_{W(\widetilde q_1',q_1')_t(I;W(\widetilde r_1' , r_2')_x)} &\leq C\|\chi_I\|_{W(\widetilde q_0,q_0)_t} \||u|^k\|_{W(\frac{\widetilde q}{k},\frac{q}{k})_t W(\frac{\widetilde r}{k}, \frac{r}{k})_x} \\
	&\leq C T^{\frac{1}{\widetilde{q}_0}}M^{k}
\end{align}
if $T<1$
provided
\begin{equation} \label{conn}
\frac1{r_2}\leq\frac1{r_1},\quad \frac1{\widetilde q_0}>0,
\end{equation}
\begin{equation} \label{con1}
\frac{1}{\widetilde q_0} +\frac{k}{\widetilde q} + \frac{1}{\widetilde q_1}= 1, \quad \frac{1}{q_0} +\frac{k}{q} + \frac{1}{q_1}= 1, \quad \frac{k}{\widetilde r} + \frac{1}{\widetilde r_1} = 1, \quad
	\frac{k}{r}+\frac{1}{r_2}=1.
\end{equation}
Hence, if we fix $M=2C(\|f\|_{\dot H^{\sigma}}+\|g\|_{\dot H^{\sigma-1}})$ and take $T<1$ such that
\begin{equation} \label{M}
	CT^{\frac{1}{\widetilde{q}_0}}M^{k-1} \le \frac{1}{2},
\end{equation}	
we get
\begin{equation}\label{qds}
	\|\Phi(u)\|_{W(\widetilde q,q)_t(I;W(\widetilde r , r)_x)} \leq  M
\end{equation}
for $(\widetilde{q},\widetilde{r})$ and $(q,r)$ given as in Theorem \ref{thmthm}.
Indeed, we first eliminate  $r_2,\widetilde q_0,q_0$ in \eqref{conn} and \eqref{con1} to see
\begin{equation} \label{con}
0 \le \frac{k}{\widetilde q} + \frac{1}{\widetilde q_1} <1,
\quad 0 \le \frac{k}{q} + \frac{1}{q_1} \le 1,
\quad \frac{k}{\widetilde r} + \frac{1}{\widetilde r_1} = 1,
\quad 1-\frac{k}{r} \le \frac{1}{r_1}.
\end{equation}
We then eliminate the remaining redundant pairs $\widetilde q_1, q_1 , \widetilde {r}_1,r_1$
in \eqref{con}, \eqref{inhoc} and \eqref{inhoc1};
we substitute $1/\widetilde{r}_1 = 1-k/\widetilde r$ into \eqref{inhoc} and \eqref{inhoc1} to eliminate $\widetilde{r}_1$,
and then make each lower bound of $1/r_1,1/\widetilde {q}_1,1/q_1$ less than all the upper bounds thereof in turn to eliminate them,
to reduce  \eqref{con}, \eqref{inhoc} and \eqref{inhoc1} to
\begin{equation}\label{rdd}
\begin{aligned}
 0\le\frac{1}{\widetilde q},\, \frac{1}{q} < \frac{1}{k-1} \quad(q\neq\infty),\quad
 \frac{1}{3(k-1)}&< \frac{1}{\widetilde r} < \frac{1}{2(k-1)}, \quad \frac{1}{3(k-1)} < \frac{1}{r},\\
 \frac{1}{\widetilde q} + \frac{2}{\widetilde r} &< \frac{3}{2(k-1)}.
\end{aligned}
\end{equation}
In summary, all the requirements on $\widetilde{q},q,\widetilde{r},r$ for which \eqref{qds} holds are given by
\eqref{rdd}, \eqref{h1} and \eqref{h2},
in which one can eliminate $\sigma,k$ to boil down to \eqref{qaz2}.
On the other hand, if we eliminate $\widetilde{q},q,\widetilde{r},r$ in the requirements under $0<\sigma\le 1/2$ (so $1<k\leq3$),
we arrive at
\begin{equation*}
	\max \{0, 1-\frac{1}{k-1} \} < \sigma \leq \frac{1}{2}
\end{equation*}
equivalent to $0<\sigma\le 1/2$ and $1<k< k(\sigma)$ as in Theorem \ref{thmthm}.

Next we show that $\Phi$ is a contraction on $X$.
Note first from the assumption \eqref{cF} that
\begin{align*}
	|F_k(u)-F_k(v)| &= \bigg| \int_0^1 \frac{d}{d\eta} F_k(\eta u + (1-\eta)v) d\eta \bigg| \\
	&= \bigg|\int_0^1 (u-v)\cdot F_k' (\eta u +(1-\eta)v) d\eta \bigg| \\
	&\lesssim |u-v|(|u|+|v|)^{k-1}.
\end{align*}
Using the same argument as above, we then see that
\begin{align*}
	\|&\Phi(u)-\Phi(v)\|_{W(\widetilde q,q)_t(I;W(\widetilde r , r)_x)}\\
&\leq C \|F_k(u)-F_k(v)\|_{W(\widetilde q_1',q_1')_t(I;W(\widetilde r_1' , r_2')_x)}\\
&  \leq
	C\|\chi_{I}\|_{W(\widetilde q_0,q_0)_t} \|u-v\|_{W(\widetilde q,q)_t W(\widetilde r, r)_x} \|(|u|+|v|)^{k-1}\|_{W(\frac{\widetilde q}{k-1},\frac{q}{k-1})_t W(\frac{\widetilde r}{k-1}, \frac{r}{k-1})_x} \\
	& \leq C T^{\frac1{\widetilde{q}_0}} M^{k-1} \|u-v\|_{W(\widetilde q,q)_t(I;W(\widetilde r , r)_x)}
\end{align*}
for the same $(\widetilde{q},\widetilde{r})$ and $(q,r)$ as above.
Hence $\Phi$ is a contraction on $X$ since we are taking $T$ and $M$ so that \eqref{M} holds.
Now by the contraction mapping principle, there exists a unique solution
\begin{equation*}
	u \in W(\widetilde q, q)_t([0,T]; W(\widetilde r,r)_x(\mathbb{R}^3))
\end{equation*}
for given initial data $(f,g)\in(\dot{H}^{\sigma},\dot{H}^{\sigma-1})$.

It remains to show \eqref{data}. We first show $u\in C_t([0,T];\dot{H}^{\sigma})$.
Since $e^{it\sqrt{-\Delta}}$ is an isometry in $L^2$, we first see
\begin{equation*}
\sup_{t\in I}\|\cos (t\sqrt{-\Delta})f\|_{\dot H^{\sigma}} +	\sup_{t\in I}\Big\|\frac{\sin (t\sqrt{-\Delta})}{\sqrt{-\Delta}}g\Big\|_{\dot H^{\sigma}} \leq C( \|f\|_{\dot H^{\sigma}} + \|g\|_{\dot H^{\sigma-1}}),
\end{equation*}
while
\begin{align}\label{kkl}
	\nonumber
	\sup_{t \in I} \bigg\|\int_0^t \frac{\sin{(t-s)\sqrt{-\Delta}}}{\sqrt{-\Delta}} F_k(u) ds\bigg\|_{\dot{H}^{\sigma}} &\lesssim\sup_{t \in I} \bigg\|\int_0^t e^{i(t-s)\sqrt{-\Delta}}|\nabla|^{-(1-\sigma)} F_k(u) ds\bigg\|_{L^2} \\
&		\lesssim  \|F_k(u)\|_{W(\widetilde q_1',q_1')_t(I;W(\widetilde r_1',r_1')_x)}
\end{align}
with $(\widetilde{q}_1,q_1,\widetilde{r}_1,r_1)$ given as in Corollary \ref{corr} with $1-\sigma$.
For \eqref{kkl} we also used
the following adjoint form of \eqref{T1},
\begin{equation*}
	\bigg\| \int_{-\infty}^{\infty} e^{i(t-s)\sqrt{-\Delta}}|\nabla|^{-\sigma} F(\cdot,s)ds\bigg\|_{L^2} \lesssim \|F\|_{W(\widetilde q_1' , q_1')_t W(\widetilde r_1' ,r_1')_x}
\end{equation*}
(see \eqref{dudu}).
Recalling Remark \ref{r} and using the same argument as in \eqref{wd2}, we then get
\begin{equation} \label{CC1}
	\sup_{t \in I} \|u\|_{\dot{H}^{\sigma}} \lesssim \|f\|_{\dot{H}^{\sigma}} + \|g\|_{\dot{H}^{\sigma-1}}.
\end{equation}
The other assertion $u\in C_t^1([0,T];\dot{H}^{\sigma-1})$ can be also proved similarly.
Using
$$\partial_t u = \cos (t\sqrt{-\Delta})g - \sin (t\sqrt{-\Delta})|\nabla|f+ \int_0^t {\cos((t-s)\sqrt{-\Delta})} F_k(u)(\cdot,s)ds$$
one can indeed see that
\begin{equation} \label{CC2}
	\sup_{t \in I} \|u\|_{\dot{H}^{\sigma-1}} \lesssim \|f\|_{\dot{H}^{\sigma}} + \|g\|_{\dot{H}^{\sigma-1}}.
\end{equation}
Continuous dependence on the data is
similarly included in the above arguments.
This completes the proof.

\begin{proof}[Proof of Corollaries]
To obtain \eqref{T1}, we apply the complex interpolation \eqref{inter} between \eqref{classi} with $(q,r,\sigma)=(\infty,2,0)$
and \eqref{T} with $(\widetilde q, \widetilde r) = (\widetilde q_1,\widetilde r_1)$ and $(q,r,\sigma)=(q_1,r_1,\sigma_1)$ arbitrarily near $\sigma_1=1$ to obtain \eqref{T1} for
\begin{equation} \label{hh}
	\frac{1}{\widetilde q} = \frac{\theta}{\widetilde q_1},
	\quad \frac{1}{q} = \frac{\theta}{{q_1}},
	\quad \frac{1}{\widetilde r} =\frac{\theta}{\widetilde r_1}+\frac{1-\theta}{2},
	\quad \frac{1}{r} =\frac{\theta}{{r_1}}+\frac{1-\theta}{2}, \quad 0<\sigma=\theta<1.
\end{equation}
Since $\widetilde q_1 , q_1, \widetilde r_1, r_1$ in \eqref{hh} are satisfying \eqref{ass} and \eqref{c1} with $n=3$,
the conditions on $\widetilde q , q, \widetilde r, r$ in Corollary \ref{corr} follows.

To obtain \eqref{inho}, we similarly make use of the complex interpolation between \eqref{reta} and
\begin{equation} \label{in1}
\bigg\|\int_0^t e^{i(t-s)\sqrt{-\Delta}}|\nabla|^{-2\sigma}F(\cdot,s)ds \bigg\|_{W(q,q)_t W(r,r)_x} \lesssim \|F\|_{W(q',q')_tW(r',r')_x}
\end{equation}
where $q>q'$ and $(q,r,\sigma)$ is given as in \eqref{waveadmi}.
This estimate is easily derived from \eqref{classi} using the $TT^\ast$ argument and the Christ-Kiselev lemma.
We shall also use the following interpolation space identities.

\begin{lem} [\cite{BL}] \label{inS}
	Let $0<\theta<1$, $1\le r_0, r_1<\infty$ and $\sigma_0, \sigma_1 \in \mathbb{R}$.
	Then
	\begin{equation*}
		(\dot H_{r_0}^{\sigma_0}, \dot H_{r_1}^{\sigma_1})_{[\theta]}= \dot H_r^{\sigma},
	\end{equation*}
	if ${1}/{r}={\theta}/{r_0} + {(1-\theta)}/{r_1}$ and $\sigma=\theta \sigma_0 + (1-\theta)\sigma_1$ with $\sigma_0\neq \sigma_1$.
Here, $\dot H_r^{\sigma}=|\nabla|^{-\sigma}L^r$ denotes the homogeneous Sobolev space.
\end{lem}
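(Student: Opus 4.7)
The plan is to establish both inclusions of the Calder\'on complex interpolation identity by combining the isometric lift $|\nabla|^s\colon \dot H_r^s \to L^r$ with Stein's interpolation theorem for analytic families of operators, thereby reducing the Sobolev-space interpolation to Riesz--Thorin interpolation of $L^p$-spaces.

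For the forward inclusion $(\dot H_{r_0}^{\sigma_0}, \dot H_{r_1}^{\sigma_1})_{[\theta]} \subseteq \dot H_r^{\sigma}$, take $f$ in the interpolated space and select an analytic function $F$ on the strip $\{0\le\mathrm{Re}\,z\le 1\}$, continuous up to the boundary, with $F(\theta)=f$ and with $\|F(1+it)\|_{\dot H_{r_0}^{\sigma_0}}$, $\|F(it)\|_{\dot H_{r_1}^{\sigma_1}}$ uniformly bounded (matching the paper's convention in which $\theta=1$ corresponds to index $0$). Form the affine interpolant $\sigma(z):=z\sigma_0+(1-z)\sigma_1$ and introduce the auxiliary family
\[
G(z) := e^{(z-\theta)^2}\,|\nabla|^{\sigma(z)}\,F(z).
\]
On the right boundary $\mathrm{Re}\,z=1$, factor $|\nabla|^{\sigma(1+it)}=|\nabla|^{\sigma_0}\cdot|\nabla|^{it(\sigma_0-\sigma_1)}$: the first factor sends $\dot H_{r_0}^{\sigma_0}$ isometrically to $L^{r_0}$, while the imaginary-order factor is bounded on $L^{r_0}$ with norm growing at most polynomially in $|t|$ by the Mihlin--H\"ormander multiplier theorem, growth absorbed by the Gaussian damping. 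The same analysis on $\mathrm{Re}\,z=0$ gives uniform $L^{r_1}$-bounds, so Stein's theorem yields $G(\theta)\in L^r$, whence $f=|\nabla|^{-\sigma}G(\theta)\in\dot H_r^{\sigma}$.

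For the reverse inclusion, start from $g:=|\nabla|^{\sigma}f\in L^r$, apply the classical Calder\'on factorization of $L^r$-functions to produce an analytic extension $G$ whose boundary traces are bounded in $L^{r_0}$, $L^{r_1}$, and then set $F(z):=e^{-(z-\theta)^2}|\nabla|^{-\sigma(z)}G(z)$. The same multiplier bounds show $F$ is an admissible analytic extension witnessing $f\in(\dot H_{r_0}^{\sigma_0},\dot H_{r_1}^{\sigma_1})_{[\theta]}$ with equivalent norm.

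The main obstacle is making precise sense of the imaginary-order Riesz potentials $|\nabla|^{i\tau}$ as bounded operators on $L^p$ with norms polynomial in $|\tau|$; this reduces to verifying the Mihlin--H\"ormander conditions for the multiplier $|\xi|^{i\tau}$, whose derivatives satisfy the requisite differential inequalities with constants growing polynomially in $|\tau|$, and it is precisely the Gaussian damping $e^{\pm(z-\theta)^2}$ that allows this growth to be absorbed so Stein's theorem applies. A secondary technical point is fixing a consistent realization of $\dot H_r^{\sigma}$ (e.g.\ as tempered distributions modulo polynomials, or through a Littlewood--Paley characterization) so that $|\nabla|^{\pm s}$ is unambiguously defined on the relevant classes; the hypothesis $\sigma_0\neq\sigma_1$ then plays no further role beyond ensuring the lift family is genuinely $z$-dependent.
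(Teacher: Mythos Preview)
The paper does not supply its own proof of this lemma; it is simply quoted from Bergh--L\"ofstr\"om \cite{BL} as a known interpolation identity and used as a black box in the proof of the corollaries. There is thus no argument in the paper to compare your sketch against.

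On its own merits, your outline is the standard one and is essentially correct for $1<r_0,r_1<\infty$: lift via $|\nabla|^{\sigma(z)}$, control the imaginary-order Riesz potentials $|\nabla|^{i\tau}$ on $L^p$ through Mihlin--H\"ormander, and absorb the polynomial growth of their operator norms with the Gaussian factor so that the resulting family is admissible for the Calder\'on functor on the couple $(L^{r_0},L^{r_1})$. Two remarks. First, what you actually need in the forward direction is not Stein's analytic-family theorem for operators but simply the definition of the complex interpolation space together with $(L^{r_0},L^{r_1})_{[\theta]}=L^r$; once $G$ is shown to lie in $\mathcal F(L^{r_0},L^{r_1})$ you read off $G(\theta)\in L^r$ directly. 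Second, and more substantively, the lemma as stated allows $r_0=1$ or $r_1=1$, and at $p=1$ the Mihlin--H\"ormander theorem fails, so $|\nabla|^{i\tau}$ is not bounded on $L^1$ and your argument breaks down at that endpoint. In the paper's actual application only exponents strictly between $1$ and $\infty$ occur, so this gap is harmless there, but if you want the full range of the stated lemma you must either argue differently at $p=1$ or restrict the hypothesis to $1<r_0,r_1<\infty$.
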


Indeed, by applying the complex interpolation
between \eqref{reta} with $\widetilde r = r$ and \eqref{in1}
with $(q,r,\sigma)$ arbitrarily near $(\infty,2,0)$,
we first get
	\begin{equation} \label{in2}
		\bigg\|\int_0^t e^{i(t-s)\sqrt{-\Delta}}|\nabla|^{-1}F(\cdot,s)ds \bigg\|_{W(\widetilde q,q)_t W(r,r)_x} \lesssim \|F\|_{W(\widetilde q_1', q_1')_t W(\widetilde r_1',r_1')_x}
	\end{equation}
under \eqref{inhoc} and \eqref{inhoc1} with $\widetilde r = r$.
Again by interpolating between \eqref{reta} with $\widetilde r_1 = r_1$ and \eqref{in1} with $(q,r,\sigma)$ arbitrarily near $(\infty,2,0)$,
we also get
	\begin{equation} \label{in3}
		\bigg\|\int_0^t e^{i(t-s)\sqrt{-\Delta}}|\nabla|^{-1}F(\cdot,s)ds \bigg\|_{W(\widetilde q, q)_t W(\widetilde r,r)_x} \lesssim \|F\|_{W(\widetilde q_1',q_1')_t W(r_1',r_1')_x}
	\end{equation}
under \eqref{inhoc} and \eqref{inhoc1} with $\widetilde r_1 = r_1$.
Finally by interpolating between these two estimates \eqref{in2} and \eqref{in3},
we obtain \eqref{inho} as desired.	
\end{proof}


\end{document}